\def\N{\mathbb{N}}
\def\Z{\mathbb{Z}}
\def\F{\mathbb{F}}
\def\A{\mathbb{A}}
\def\P{\mathbb{P}}
\def\G{\mathbb{G}}
\def\H{\mathrm{H}}
\def\Q{\mathbb{Q}}
\DeclareMathOperator{\Proj}{Proj}
\DeclareMathOperator{\Spec}{Spec}
\DeclareMathOperator{\Br}{Br}
\DeclareMathOperator{\Pic}{Pic}
\DeclareMathOperator{\GL}{GL}
\DeclareMathOperator{\Mat}{Mat}
\DeclareMathOperator{\id}{id}
\DeclareMathOperator{\im}{im}
\DeclareMathOperator{\lcm}{lcm}
\DeclareMathOperator{\Hom}{Hom}
\DeclareMathOperator{\Mor}{Mor}
\DeclareMathOperator{\tors}{tors}
\DeclareMathOperator{\Sch}{Sch}
\DeclareMathOperator{\op}{op}
\DeclareMathOperator{\rank}{rank}
\DeclareMathOperator{\Aut}{Aut}
\DeclareMathOperator{\Sym}{Sym}
\newtheoremstyle{mystyle}
  {}
  {}
  {}
  {0pt}
  {}
  {.}
  { }
  {\textbf{\thmname{#1}\thmnumber{ #2}}\thmnote{ (#3)}}
\theoremstyle{mystyle}
\numberwithin{equation}{subsection}
\newtheorem{theorem}[subsection]{Theorem}
\newtheorem{lemma}[subsection]{Lemma}
\newtheorem{remark}[subsection]{Remark}
\newtheorem{definition}[subsection]{Definition}
\newtheoremstyle{pgstyle} {} {} {} {} {} {.} { } {\textbf{\thmname{#1}\thmnumber{#2}}\thmnote{ (#3)}}
\theoremstyle{pgstyle}
\newtheorem{pg}[subsection]{}
\newcommand{\mf}[1]{\mathfrak{#1}}
\newcommand{\ms}[1]{\mathscr{#1}}
\newcommand{\mc}[1]{\mathcal{#1}}
\newcommand{\mb}[1]{\mathbf{#1}}
\newcommand{\mr}[1]{\mathrm{#1}}
\newcommand{\ml}[1]{\mathsf{#1}}
\newcommand{\et}{\operatorname{\acute et}}
\newcommand{\zar}{\operatorname{zar}}
\newcommand{\fppf}{\operatorname{fppf}}
\newcommand{\BM}[1]{\begin{bmatrix} #1 \end{bmatrix}}
\begin{document}
\title{The cohomological Brauer group of weighted projective spaces and stacks}
\author{Minseon Shin}
\email{shinms@uw.edu}
\urladdr{\url{http://sites.math.washington.edu/~shinms}}
\address{Department of Mathematics \\ University of Washington \\ Seattle, WA 98195-4350 USA}
\begin{abstract} We compute the cohomological Brauer groups of twists of weighted projective spaces and weighted projective stacks. \end{abstract}
\date{\today}
\maketitle

\section{Introduction}

For any scheme $S$, we denote $\Br'(S) := \H_{\et}^{2}(S,\G_{m})_{\tors}$ the \emph{cohomological Brauer group} of $S$. In this paper, we are interested in the cohomological Brauer groups of twists of weighted projective spaces and weighted projective stacks.

Let $n \ge 1$ and let $\rho = (\rho_{0},\dotsc,\rho_{n})$ be an $(n+1)$-tuple of positive integers. There is a $\G_{m}$-action on $\A^{n+1}$ sending $u \cdot (t_{0},\dotsc,t_{n}) \mapsto (u^{\rho_{0}}t_{0},\dotsc,u^{\rho_{n}}t_{n})$; let $\mc{P}_{\Z}(\rho) := [(\A_{\Z}^{n+1} \setminus \{0\}) / \G_{m}]$ denote the quotient stack; for any scheme $S$, the base change $\mc{P}_{S}(\rho) := \mc{P}_{\Z}(\rho) \times_{\Spec \Z} S$ is called the \emph{weighted projective stack} associated to $\rho$ over $S$. \par The assumption that each $\rho_{i}$ is positive implies that the inertia stack of $\mc{P}_{\Z}(\rho)$ is finite; hence $\mc{P}_{\Z}(\rho)$ admits a coarse moduli space which may be described as follows \cite[\S2.1]{ABRAMOVICHHASSETT-SVWAT}. We view the polynomial ring $R := \Z[t_{0},\dotsc,t_{n}]$ as a $\Z$-graded ring where $\deg(t_{i}) = \rho_{i}$, and set $\P_{\Z}(\rho) := \Proj R$ and $\P_{S}(\rho) := \P_{\Z}(\rho) \times_{\Spec \Z} S$ for any scheme $S$. The scheme $\P_{S}(\rho)$ is the \emph{weighted projective space} associated to $\rho$ over $S$. Weighted projective spaces often arise in the construction of moduli spaces. For example, we may naturally view the moduli space of elliptic curves (in characteristic $0$) as an open subscheme of $\P(4,6)$. The moduli space of cubic surfaces is isomorphic to $\P(1,2,3,4,5)$, see \cite{REINECKE-MSOCS2012}.

\begin{theorem} \label{0001} Let $f_{X} : X \to S$ be a morphism of schemes such that there exists an etale surjection $S' \to S$ such that $X \times_{S} S' \simeq \P_{S'}(\rho)$. Then there is an exact sequence \begin{align} \label{0001-01} \Gamma(S,\underline{\Z}) \to \Br'(S) \stackrel{f_{X}^{\ast}}{\to} \Br'(X) \to 0 \end{align} of groups. \end{theorem}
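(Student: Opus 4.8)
The strategy is to use the Leray spectral sequence $E_2^{p,q} = \H^p_{\et}(S, R^q f_{X,*}\G_m) \Rightarrow \H^{p+q}_{\et}(X,\G_m)$, so that the problem comes down to computing $R^q f_{X,*}\G_m$ for $q \le 2$. As these sheaves are unchanged by the étale base change $S' \to S$, I may compute them for the untwisted projection $\P_{S'}(\rho) \to S'$. There are three facts to establish. First, $\P_\Z(\rho) = \Proj\Z[t_0,\dotsc,t_n] \to \Spec\Z$ is proper and flat with geometrically integral fibres having $\H^0(\P_{\bar k}(\rho),\mc{O}) = \bar k$, so cohomology and base change for the structure sheaf gives $f_{X,*}\mc{O}_X = \mc{O}_S$ universally, hence $f_{X,*}\G_m = \G_m$. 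Second, $\Pic(\P_{\bar k}(\rho)) \cong \Z$, generated by its unique ample generator; automorphisms preserve ampleness, so they act trivially on $\Pic$, so there is no monodromy and $R^1 f_{X,*}\G_m = \underline\Z$ canonically, in particular also for the twisted $X$. Third, $R^2 f_{X,*}\G_m = 0$; using proper base change for $\mu_m$-coefficients together with the Kummer sequence and the finiteness of $\Pic$, this reduces to showing $\H^2_{\et}(\P_{\bar k}(\rho),\G_m) = 0$ for every separably closed field $\bar k$. This is the main obstacle. I would prove it using the toric geometry of $\P_{\bar k}(\rho)$: either directly, via the affine charts $\Spec(\Z[t_0,\dotsc,t_n]_{(t_i)}) \cong \A^n/\mu_{\rho_i}$ and a \v Cech argument, or by comparison with the smooth proper weighted projective stack $\mc{P}_{\bar k}(\rho) = [(\A_{\bar k}^{n+1}\setminus\{0\})/\G_m]$, which is a weighted-projective bundle over $B\G_m$ and hence has $\H^2_{\et}(\mc{P}_{\bar k}(\rho),\G_m) = 0$ (one may replace $B\G_m$ by a large projective space à la Totaro and invoke $\H^2_{\et}(\P^N_{\bar k},\G_m) = 0$), followed by descent along the coarse-space map $\mc{P}_{\bar k}(\rho) \to \P_{\bar k}(\rho)$.

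Granting these, the exact sequence of low-degree terms of the spectral sequence reads
\[ \Gamma(S,\underline\Z) \xrightarrow{d_2} \H^2_{\et}(S,\G_m) \xrightarrow{f_X^*} \H^2_{\et}(X,\G_m) \xrightarrow{\psi} \H^1_{\et}(S,\underline\Z) \xrightarrow{d_2} \H^3_{\et}(S,\G_m). \]
Viewing each term as a module over the ring $\Gamma(S,\underline\Z) = \H^0_{\et}(S,\underline\Z)$, the first differential is $\Gamma(S,\underline\Z)$-linear, hence equal to multiplication by the class $[X] := d_2(1) \in \H^2_{\et}(S,\G_m)$ of the twist. This class is torsion: the $\Z$-graded algebra $\mc{O}_S[t_0,\dotsc,t_n]$ descends from $S'$ to $S$, hence so does the coherent sheaf $\mc{O}_X(a)$ for each $a \ge 0$, and for $a$ sufficiently divisible $\mc{O}_X(a)$ is invertible and restricts on each fibre to a positive multiple of the ample generator of $\Pic(\P_{\bar k}(\rho))$; this globally defined line bundle lifts a positive multiple of the section $1 \in \Gamma(S,\underline\Z)$ under the edge map $\Pic(X) \to \Gamma(S,\underline\Z)$, so $\ell\,[X] = 0$ for some $\ell \ge 1$. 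In particular $d_2(\Gamma(S,\underline\Z)) \subseteq \Br'(S)$, the map $\Gamma(S,\underline\Z) \to \Br'(S)$ of \eqref{0001-01} is $d_2$, and $\ker\bigl(f_X^*\colon \Br'(S) \to \Br'(X)\bigr) = \Br'(S) \cap \ker f_X^* = d_2(\Gamma(S,\underline\Z))$; this is exactness of \eqref{0001-01} at $\Br'(S)$.

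For surjectivity onto $\Br'(X)$, let $\alpha \in \H^2_{\et}(X,\G_m)$ be torsion. Then $\psi(\alpha)$ is torsion in $\H^1_{\et}(S,\underline\Z)$, which is torsion-free: from $0 \to \underline\Z \xrightarrow{n} \underline\Z \to \underline{\Z/n\Z} \to 0$ one obtains $\H^1_{\et}(S,\underline\Z)[n] = \mr{coker}\bigl(\H^0_{\et}(S,\underline\Z) \to \H^0_{\et}(S,\underline{\Z/n\Z})\bigr)$, and this cokernel vanishes because a Zariski-locally-constant $\Z/n\Z$-valued function lifts to a Zariski-locally-constant $\Z$-valued one (lift each value on the corresponding clopen level set to $\{0,\dotsc,n-1\}$). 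Hence $\psi(\alpha) = 0$, so $\alpha = f_X^*(\gamma)$ for some $\gamma \in \H^2_{\et}(S,\G_m)$. Choosing $N \ge 1$ with $N\alpha = 0$ gives $N\gamma \in \ker f_X^* = d_2(\Gamma(S,\underline\Z))$, which is $\ell$-torsion, so $N\ell\gamma = 0$ and $\gamma \in \H^2_{\et}(S,\G_m)_{\tors} = \Br'(S)$. Thus $\alpha = f_X^*(\gamma)$ with $\gamma \in \Br'(S)$, giving exactness of \eqref{0001-01} at $\Br'(X)$ and completing the proof.
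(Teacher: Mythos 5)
Your overall architecture coincides with the paper's: the Leray spectral sequence for $f_{X}$, the identifications $f_{X,\ast}\G_{m}=\G_{m}$ and $\mb{R}^{1}f_{X,\ast}\G_{m}\simeq\underline{\Z}$, the vanishing of $\mb{R}^{2}f_{X,\ast}\G_{m}$, and the torsion bookkeeping (torsion-freeness of $\H^{1}_{\et}(S,\underline{\Z})$, torsionness of $d_{2}(1)$) at the end. But the two steps you present as routine reductions are exactly where the paper has to work, and both have genuine gaps. The stalk of $\mb{R}^{2}f_{X,\ast}\G_{m}$ at a geometric point is $\H^{2}_{\et}(\P_{A}(\rho),\G_{m})$ for a strictly henselian local ring $A$, and your proposed bridge to the closed fiber --- proper base change for $\mu_{m}$ plus the Kummer sequence --- only controls $m$-torsion for $m$ invertible in $A$. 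It says nothing about $p$-torsion when the residue characteristic is $p>0$ (Kummer is not exact for the \'etale topology there), nor about possible non-torsion classes: $\P_{A}(\rho)$ is not regular, so there is no a priori reason for $\H^{2}_{\et}(\P_{A}(\rho),\G_{m})$ to be torsion, and full vanishing of the stalk is what the five-term sequence needs. The paper fills this step with a deformation-theoretic argument (\Cref{20190128-05}): Grothendieck existence plus Hall--Rydh Tannaka duality to reduce to the infinitesimal thickenings $X_{\ell}$, deformation of a twisted line bundle using $\H^{2}(X_{\ell},\mf{m}^{\ell}\mc{O}_{X_{\ell}})=0$, then Artin approximation. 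Some substitute for this is required.

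The fiber computation $\H^{2}_{\et}(\P_{\bar k}(\rho),\G_{m})=0$ is also not a formality, and neither of your two suggested routes works as stated. The \v Cech argument over the affine charts fails in positive characteristic: the charts are affine toric varieties, and $\Br(k[x_{1},x_{2}])$ already has nontrivial $p$-torsion over an algebraically closed field of characteristic $p$, so the \v Cech-to-derived spectral sequence does not collapse to a \v Cech computation. The stack route requires injectivity of $\pi^{\ast}\colon\H^{2}_{\et}(\P_{\bar k}(\rho),\G_{m})\to\H^{2}_{\et}(\mc{P}_{\bar k}(\rho),\G_{m})$ along the coarse moduli map, which is not automatic ($\mb{R}^{1}\pi_{\ast}\G_{m}$ is supported on the stacky locus, and already on Picard groups $\pi^{\ast}$ is multiplication by $\lcm(\rho)$); the paper treats the stack and the scheme separately for precisely this reason. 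The paper's actual argument uses a toric resolution and birational invariance to cut $\H^{2}_{\et}$ down to the Zariski-locally trivial classes $\check{\H}^{2}(\mf{U},\G_{m})$ (DeMeyer--Ford--Miranda), then a limit of dilations to reduce to weights that are powers of a single prime, followed by an explicit double-complex computation. Finally, a smaller point: your justification that $d_{2}(1)$ is torsion (``the graded algebra descends to $S$'') is essentially circular --- if the graded algebra descended, the obstruction class would vanish; the correct argument is the determinant trick showing that the coboundary of a $\PGL_{r}$-valued cocycle is $r$-torsion.
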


\begin{theorem} \label{0021} Let $S$ be a scheme, let $f_{\mc{X}} : \mc{X} \to S$ be a morphism of algebraic stacks such that there exists an etale surjection $S' \to S$ such that $\mc{X} \times_{S} S' \simeq \mc{P}_{S'}(\rho)$. Then there is an exact sequence \begin{align} \label{0021-01} \Gamma(S,\underline{\Z}) \to \Br'(S) \stackrel{f_{\mc{X}}^{\ast}}{\to} \Br'(\mc{X}) \to 0 \end{align} of groups. Let $\pi : \mc{X} \to X$ be the coarse moduli space of $\mc{X}$. Then $X$ satisfies the hypothesis of \Cref{0001}, and \labelcref{0001-01} and \labelcref{0021-01} fit into a commutative diagram \begin{equation} \label{0021-02} \begin{tikzpicture}[>=angle 90, baseline=(current bounding box.center)] 
\matrix[matrix of math nodes,row sep=3em, column sep=2em, text height=1.9ex, text depth=0.5ex] { 
|[name=11]| \Gamma(S,\underline{\Z}) & |[name=12]| \Br'(S) & |[name=13]| \Br'(\mc{X}) & |[name=14]| 0 \\ 
|[name=21]| \Gamma(S,\underline{\Z}) & |[name=22]| \Br'(S) & |[name=23]| \Br'(X) & |[name=24]| 0 \\ 
}; 
\draw[-,font=\scriptsize,transform canvas={xshift= 1pt}](12) edge (22);
\draw[-,font=\scriptsize,transform canvas={xshift=-1pt}](12) edge (22);
\draw[->,font=\scriptsize]
(11) edge (12) (12) edge node[above=0pt] {$f_{\mc{X}}^{\ast}$} (13) (13) edge (14)
(21) edge (22) (22) edge node[below=0pt] {$f_{X}^{\ast}$} (23) (23) edge (24)
(21) edge node[left=0pt] {$\times \lcm(\rho)$} (11) (23) edge node[right=0pt] {$\pi^{\ast}$} (13); \end{tikzpicture} \end{equation} where the leftmost vertical map denotes multiplication-by-$\lcm(\rho)$. \end{theorem}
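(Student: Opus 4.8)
The plan is to compute $\H^{2}_{\et}(\mc{X},\G_{m})$ from the Leray spectral sequence of $f_{\mc{X}}\colon\mc{X}\to S$, extract the exact sequence \eqref{0021-01}, and then compare it with \eqref{0001-01} via the coarse space morphism $\pi$. The first task is to identify the sheaves $R^{q}(f_{\mc{X}})_{\ast}\G_{m}$ on the small étale site of $S$; since these are unaffected by the cover over which $\mc{X}$ becomes $\mc{P}_{S'}(\rho)$, it suffices to treat $\mc{P}_{S}(\rho)\to S$. One gets $R^{0}=\G_{m}$ (because, all $\rho_{i}$ being positive, the $\G_{m}$-invariant part of $\Gamma(\A^{n+1}\setminus\{0\},\mc{O})=\mc{O}_{S}[t_{0},\dots,t_{n}]$ is just $\mc{O}_{S}$), $R^{1}=\underline{\Z}$, and $R^{2}=0$; for the last two I would use the morphism $g\colon\mc{P}_{S}(\rho)\to B\G_{m,S}$ induced by $\A^{n+1}\setminus\{0\}\to\Spec\Z$, whose fibres are copies of $\A^{n+1}\setminus\{0\}$ and hence (for $n\ge1$, by purity along the codimension-$\ge2$ locus $\{0\}$) have vanishing relative Picard and Brauer sheaves, so that $R^{1}$ and $R^{2}$ of $f$ agree with those of $B\G_{m,S}\to S$, i.e.\ with $\underline{\Z}$ (the tautological character) and $0$. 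This $\underline{\Z}$ stays untwisted for $\mc{X}$, since $\underline{\Aut}(\mc{P}(\rho))$ fixes the ample generator $\mc{O}_{\mc{P}(\rho)}(1)$ of the Picard group.

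With $R^{0}=\G_{m}$, $R^{1}=\underline{\Z}$, $R^{2}=0$ and $\H^{1}_{\et}(S,\underline{\Z})=0$, the terms $E_{\infty}^{0,2}$ and $E_{\infty}^{1,1}$ of the spectral sequence vanish, so the edge homomorphism becomes an isomorphism $f_{\mc{X}}^{\ast}\colon\operatorname{coker}\!\big(d_{2}\colon\Gamma(S,\underline{\Z})\to\H^{2}(S,\G_{m})\big)\xrightarrow{\ \sim\ }\H^{2}(\mc{X},\G_{m})$, where $d_{2}$ sends a section of $\Pic_{\mc{X}/S}=\underline{\Z}$ to the obstruction in $\H^{2}(S,\G_{m})$ to realizing it by an actual line bundle on $\mc{X}$. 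Granting for the moment that $\im(d_{2})$ lies in $\Br'(S)=\H^{2}(S,\G_{m})_{\tors}$, passage to torsion subgroups turns this isomorphism into the exact sequence \eqref{0021-01}, whose map $\Gamma(S,\underline{\Z})\to\Br'(S)$ is this $d_{2}$ and whose kernel of $f_{\mc{X}}^{\ast}$ is $\im(d_{2})$; I will take the analogous $d_{2}$ for $X$ to be the map in \eqref{0001-01}.

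Next, formation of coarse moduli spaces commutes with the flat base change $S'\to S$, and the coarse space of $\mc{P}_{S'}(\rho)$ is $\P_{S'}(\rho)$; hence $X\times_{S}S'\simeq\P_{S'}(\rho)$, so $X$ satisfies the hypothesis of \Cref{0001} and \eqref{0001-01} holds. As $\pi$ is a morphism over $S$ we have $f_{\mc{X}}=f_{X}\circ\pi$, hence $f_{\mc{X}}^{\ast}=\pi^{\ast}\circ f_{X}^{\ast}$ — this is the middle square of \eqref{0021-02}, with the vertical identity on $\Br'(S)$. The morphism $\pi$ also induces a map from the Leray spectral sequence of $X/S$ to that of $\mc{X}/S$: it is the identity on the row $q=0$ (since $\pi_{\ast}\mc{O}_{\mc{X}}=\mc{O}_{X}$), and on $E_{2}^{0,1}$ it is $\pi^{\ast}\colon\Pic_{X/S}\to\Pic_{\mc{X}/S}$; étale-locally this last map is the coarse map $\mc{P}_{S'}(\rho)\to\P_{S'}(\rho)$, under which $\pi^{\ast}\mc{O}_{\P(\rho)}(\lcm(\rho))=\mc{O}_{\mc{P}(\rho)}(1)^{\otimes\lcm(\rho)}$, and since $\mc{O}_{\P(\rho)}(\lcm(\rho))$ generates $\Pic_{\P(\rho)/S'}=\underline{\Z}$ (the integer $\lcm(\rho)$ being the least $m$ for which $\mc{O}_{\P(\rho)}(m)$ is invertible) while $\mc{O}_{\mc{P}(\rho)}(1)$ generates $\Pic_{\mc{P}(\rho)/S'}=\underline{\Z}$, this $\underline{\Z}\to\underline{\Z}$ is multiplication by $\lcm(\rho)$. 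Compatibility of $d_{2}$ with this morphism of spectral sequences yields $d_{2}^{\mc{X}}\circ(\times\lcm(\rho))=d_{2}^{X}$, which is the left square of \eqref{0021-02}; assembling this with the rows and the middle square gives the diagram. Finally the same identity shows $\lcm(\rho)\cdot d_{2}^{\mc{X}}(1)=d_{2}^{X}(1)\in\Br'(S)$ by \eqref{0001-01}, so $d_{2}^{\mc{X}}(1)$ — and hence $\im(d_{2}^{\mc{X}})$ — is torsion, which closes the gap left in the previous paragraph. (One can instead see directly that $d_{2}^{\mc{X}}(1)$ is torsion, as the class of the $\G_{m}$-gerbe on $S$ of $\mc{O}(1)$-twists on $\mc{X}$, which arises from the $\underline{\Aut}(\mc{P}(\rho))$-torsor classifying $\mc{X}$ via a central extension of affine group schemes $1\to\G_{m}\to\widetilde{G}\to\underline{\Aut}(\mc{P}(\rho))\to1$.)

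The step I expect to be hardest is the vanishing $R^{2}(f_{\mc{X}})_{\ast}\G_{m}=0$ — equivalently $\Br'(\mc{P}_{\bar{k}}(\rho))=0$ over a separably closed field — which I would reduce, through $g$, to the triviality of $\Pic$ and $\Br'$ of $\A^{n+1}_{\bar{k}}\setminus\{0\}$ and to $\Br'(B\G_{m,\bar{k}})=0$ (the latter from $\Pic(B\G_{m,\bar{k}})=\Z$ and $\H^{1}_{\et}(B\G_{m,\bar{k}},\underline{\Z})=0$ via the universal $\G_{m}$-torsor); some care is then needed to obtain vanishing of the sheaf $R^{2}(f_{\mc{X}})_{\ast}\G_{m}$ over an arbitrary base, rather than just on geometric fibres, and more generally the relative Picard and Brauer sheaves of $\A^{n+1}\setminus\{0\}$ over a non-reduced or non-seminormal base call for attention. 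The other delicate point is pinning down $\Pic_{\P(\rho)/S}$ together with its canonical generator $\mc{O}_{\P(\rho)}(\lcm(\rho))$, including the case when $\rho$ is not well-formed; after that, the rest is formal manipulation of the two spectral sequences and their comparison under $\pi$.
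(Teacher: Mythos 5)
Your overall skeleton --- the Leray spectral sequence for $f_{\mc{X}}$, the identification $\mb{R}^{1}f_{\mc{X},\ast}\G_{m} \simeq \underline{\Z}$, and the comparison with the spectral sequence for $X$ via $\pi$, acting as multiplication by $\lcm(\rho)$ on $\mr{E}_{2}^{0,1}$ --- agrees with the paper, and your treatment of the diagram \labelcref{0021-02} is essentially the paper's (\Cref{0016}, \Cref{0025}). But there is a genuine gap at the step you yourself flag as hardest: the vanishing of $\mb{R}^{2}f_{\mc{X},\ast}\G_{m}$. Your route via $g : \mc{P}_{S}(\rho) \to \mr{B}\G_{m,S}$ asserts that the fibres $\A^{n+1}\setminus\{0\}$ have vanishing relative Brauer sheaves ``by purity.'' This is false in positive characteristic: purity identifies $\Br'(\A_{k}^{n+1}\setminus\{0\})$ with $\Br'(\A_{k}^{n+1})$, and the latter has nontrivial $p$-torsion for $k$ algebraically closed of characteristic $p$ (Auslander--Goldman; the paper points this out in its remark on assumptions on the base field). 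So the $q=2$ column of your fibration does not vanish termwise. The paper instead computes the \emph{equalizer} of $a^{\ast}$ and $p_{2}^{\ast}$ on $\H_{\et}^{2}(\A_{k}^{n+1}\setminus\{0\},\G_{m})$ and kills it using that the $\G_{m}$-action extends to an action of the multiplicative monoid $\A^{1}$ (setting $u=0$ retracts the action map through $\Spec k$); this is the content of \Cref{0013}. Without that retraction argument your reduction to $\mr{B}\G_{m}$ does not close.

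The second, larger gap is the passage from fibres to the sheaf $\mb{R}^{2}f_{\mc{X},\ast}\G_{m}$. Its stalks are $\H_{\et}^{2}(\mc{P}_{A}(\rho),\G_{m})$ for $A$ a \emph{strictly henselian local ring}, not a separably closed field, and there is no proper base change theorem for $\G_{m}$-coefficients reducing one to the other. You note that ``some care is then needed,'' but that care is the main technical content of the paper: \Cref{0019} (and its scheme analogue \Cref{20190128-05}) proves the vanishing over a strictly henselian local ring by a deformation-theoretic argument --- Tannaka duality of Hall--Rydh, Grothendieck existence for stacks, the cohomology computation of \Cref{0015} to kill the obstruction groups $\H_{\et}^{2}(X_{\ell},\mf{m}^{\ell}\mc{O}_{X_{\ell}})$, and Artin approximation to descend from the completion. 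No substitute for this step appears in your proposal. Two smaller points: $\H_{\et}^{1}(S,\underline{\Z})=0$ fails for general (non-normal) $S$, so you cannot conclude $\mr{E}_{\infty}^{1,1}=0$; the paper instead obtains the four-term sequence ending in $\H_{\et}^{1}(S,\underline{\Z})$ and passes to torsion subgroups as at the end of \Cref{0009}. And your argument that $\im(d_{2}^{\mc{X}})$ is torsion via $\lcm(\rho)\cdot d_{2}^{\mc{X}}(1)=d_{2}^{X}(1)$ is fine, but it presupposes the comparison of the two spectral sequences, so the logical order must be arranged accordingly.
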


As we recall in \Cref{0004}, a weighted projective space $\P(\rho)$ is a toric variety. In \Cref{sec-02}, we use the results of DeMeyer, Ford, Miranda \cite{DEMEYERFORDMIRANDA-TCBGOATV} on the Brauer group of toric varieties to compute the Brauer group of $\P(\rho)$ over an algebraically closed field; taking the prime-to-$p$ limit of dilations of the toric variety reduces us to computing the $p$-torsion when each weight $\rho_{i}$ is a power of $p$. A deformation theory argument of Mathur \cite{MATHUR-EC20190726}, which uses a Tannaka duality result of Hall, Rydh \cite{HALLRYDH-CTDAAOHS2014}, allows us to deduce \Cref{0001} over a strictly henselian local ring (see \Cref{20190128-05}); then a Leray spectral sequence argument \Cref{0009} gives the result for arbitrary schemes.

The proof of \Cref{0021} is analogous to that of \Cref{0001}, except in case $S$ is the spectrum of a field, which we deduce using that the $\G_{m}$-action on $\A^{n+1}$ extends to an action of the multiplicative monoid $\A^{1}$ on $\A^{n+1}$.

\begin{pg}[Acknowledgements] I am grateful to Tim Ford, Daniel Huybrechts, Max Lieblich, and Siddharth Mathur for helpful conversations. I also thank the Max Planck Institute for Mathematics in Bonn for their hospitality and financial support. \end{pg}

\section{Weighted projective spaces}

For general background on weighted projective spaces, see e.g. \cite{DOLGACHEV-WPV1982}, \cite{RT-WPSFTTPOVWCA}. The $k$-rational points of $\P_{k}(\rho)$ may be viewed as the quotient of $k^{n+1} \setminus \{(0,\dotsc,0)\}$ by the equivalence relation $(x_{0},\dotsc,x_{n}) \sim (u^{\rho_{0}}x_{0},\dotsc,u^{\rho_{n}}x_{n})$; thus weighted projective spaces are also called \emph{twisted projective spaces}.

The weighted projective space $\P_{\Z}(\rho)$ is projective \cite[II, (2.1.6), (2.4.7)]{EGA}. It is not necessarily true that the $(\lcm(\rho))$th Veronese subring of $\Z[t_{0},\dotsc,t_{n}]$ is generated in degree 1; for example, if $\rho = (1,6,10,15)$, the monomial $t_{0}^{1}t_{1}^{4}t_{2}^{2}t_{3}^{1}$ has degree $60 = 2\lcm(\rho)$ but is not a product of two monomials of degree $30$ \cite[2.6]{DELORME-ESPACESPROJECTIFSANISOTROPES}.

\begin{pg} \label{0022} Suppose a positive integer $d$ divides all $\rho_{i}$ and set $\rho/d := (\rho_{0}/d,\dotsc,\rho_{n}/d)$. Then $\P_{\Z}(\rho) \simeq \P_{\Z}(\rho/d)$ by \cite[II, (2.4.7) (i)]{EGA}, and under this isomorphism $\mc{O}_{\P_{\Z}(\rho/d)}(\ell)$ corresponds to $\mc{O}_{\P_{\Z}(\rho)}(d\ell)$ for all $\ell \in \Z$. \end{pg}

By \Cref{0005}, we may also assume that $\gcd(\{\rho_{j}\}_{j \ne i}) = 1$ for all $i$:

\begin{lemma}[Reduction of weights] \label{0005} \cite[1.3]{DELORME-ESPACESPROJECTIFSANISOTROPES}, \cite[1.3.1]{DOLGACHEV-WPV1982}, \cite[1.3, 1.4]{ALAMRANI-CDIEGDPDFPT} Suppose $\gcd(\rho) = 1$ and set \begin{align*} d_{i} &:= \gcd(\{\rho_{j}\}_{j \ne i}) \\ s_{i} &:= \lcm(\{d_{j}\}_{j \ne i}) \\ s &:= \lcm(s_{0},\dotsc,s_{n}) \end{align*} and let $R' := \Z[t_{0}',\dotsc,t_{n}']$ be the ring with the $\Z$-grading determined by $\deg(t_{i}') = \rho_{i}' := \rho_{i}/s_{i}$. The ring homomorphism $R' \to R$ sending $t_{i}' \mapsto t_{i}^{d_{i}}$ (which multiplies the degree by $s$) induces an isomorphism \[ \varphi : \P_{\Z}(\rho) \to \P_{\Z}(\rho') \] of schemes. We have \begin{align} \label{0005-01} \lcm(\rho) = s \cdot \lcm(\rho') \end{align} since $v_{p}(\lcm(\rho)) = \alpha_{i_{0}}$ and $v_{p}(\lcm(\rho')) = \alpha_{i_{0}}-\alpha_{i_{n-1}}$ for any prime $p$, in the notation of \cite[1.2]{ALAMRANI-CDIEGDPDFPT}. \par For any integer $\ell$, there exists a unique pair $(b_{i}(\ell),c_{i}(\ell)) \in \Z^{2}$ satisfying $0 \le b_{i}(\ell) < d_{i}$ and $\ell = b_{i}(\ell) \rho_{i} + c_{i}(\ell)d_{i}$; set $\ell' := \ell-\sum_{i=0}^{n} b_{i}(\ell)\rho_{i}$. The multiplication-by-$(t_{0}^{b_{0}(\ell)} \dotsb t_{n}^{b_{n}(\ell)})$ map $R(\ell') \to R(\ell)$ induces an isomorphism $\mc{O}_{\P_{\Z}(\rho)}(\ell') \simeq \mc{O}_{\P_{\Z}(\rho)}(\ell)$ of $\mc{O}_{\P_{\Z}(\rho)}$-modules. Furthermore $\ell'$ is divisible by $s$ and we obtain an isomorphism \[ \textstyle \mc{O}_{\P_{\Z}(\rho')}(\ell'/s) \simeq \varphi_{\ast}(\mc{O}_{\P_{\Z}(\rho)}(\ell)) \] of $\mc{O}_{\P_{\Z}(\rho)}$-modules. In particular, we have \begin{align} \label{0005-02} \varphi^{\ast}(\mc{O}_{\P_{\Z}(\rho')}(\ell)) \simeq \mc{O}_{\P_{\Z}(\rho)}(s\ell) \end{align} for all $\ell \in \Z$ since $b_{i}(s\ell) = 0$. \end{lemma}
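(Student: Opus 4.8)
The plan is to work throughout on the standard affine charts $D_{+}(t_{i}) = \Spec (R_{t_{i}})_{0}$ of $\Proj R$ and $D_{+}(t_{i}') = \Spec (R'_{t_{i}'})_{0}$ of $\Proj R'$, reducing every assertion to an elementary divisibility fact about monomial exponents. First I would record the arithmetic underlying the construction. Fixing a prime $p$ and reordering so that $v_{p}(\rho_{0}) \le \dotsb \le v_{p}(\rho_{n})$, the hypothesis $\gcd(\rho) = 1$ gives $v_{p}(\rho_{0}) = 0$, and then $v_{p}(d_{i})$ equals the second smallest of the $v_{p}(\rho_{j})$ if $i$ is the unique minimizer and equals $0$ otherwise; tracking the case of ties, this yields $d_{i} s_{i} = s$ and $\lcm(d_{0},\dotsc,d_{n}) = s$ for all $i$, as well as $v_{p}(\lcm(\rho)) = v_{p}(s) + v_{p}(\lcm(\rho'))$, which is \labelcref{0005-01} (in Al-Amrani's notation, $v_{p}(\lcm(\rho)) = \alpha_{i_{0}}$ and $v_{p}(\lcm(\rho')) = \alpha_{i_{0}} - \alpha_{i_{n-1}}$). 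Two consequences I would extract at once: since $\rho_{i}' = \rho_{i}/s_{i}$ and $d_{i} s_{i} = s$, the ring homomorphism $\psi \colon R' \to R$, $t_{i}' \mapsto t_{i}^{d_{i}}$, satisfies $\deg \psi(t_{i}') = d_{i}\rho_{i} = s\rho_{i}'$, so $\psi$ multiplies degrees by $s$; and $\gcd(\rho) = 1$ forces $\gcd(d_{i},\rho_{i}) = 1$, so multiplication by $\rho_{i}$ is bijective on $\Z/d_{i}$ and the pair $(b_{i}(\ell),c_{i}(\ell))$ exists and is unique.

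Next I would show that $\varphi$ is an isomorphism. The map $\psi$ is injective and exhibits $R$ as a free $R'$-module with basis $\{\prod_{i} t_{i}^{a_{i}} : 0 \le a_{i} < d_{i}\}$, hence faithfully flat; since $\psi(t_{0}'),\dotsc,\psi(t_{n}')$ generate an ideal of $R$ with radical $(t_{0},\dotsc,t_{n}) = R_{+}$, the graded map $\psi$ determines a morphism $\varphi$ defined on all of $\Proj R$ with $\varphi^{-1}(D_{+}(t_{i}')) = D_{+}(t_{i})$, given on this chart by the ring map $(R'_{t_{i}'})_{0} \to (R_{t_{i}})_{0}$ induced by $\psi$ (one may equally well define $\varphi$ chartwise and check the pieces agree on the overlaps $D_{+}(t_{i}t_{j})$). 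That ring map is injective by faithfully flat base change. Surjectivity is the crux: a degree-$0$ monomial $\prod_{j \ne i} t_{j}^{a_{j}} t_{i}^{-c}$ of $R_{t_{i}}$ satisfies $\sum_{k \ne i} a_{k}\rho_{k} = c\rho_{i}$, and reducing this equation modulo $d_{j}$ for a fixed $j \ne i$ annihilates every term except $a_{j}\rho_{j}$ (as $d_{j}$ divides every weight other than $\rho_{j}$), so $d_{j} \mid a_{j}$ since $\gcd(d_{j},\rho_{j}) = 1$; reducing modulo $d_{i}$ similarly gives $d_{i} \mid c$. Hence the monomial is $\psi$ applied to a monomial of $R'_{t_{i}'}$ (which lies in degree $0$ after dividing the degree equation by $s$), proving surjectivity, so $\varphi$ is an isomorphism of schemes.

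For the line-bundle assertions I would run the same mechanism. Fix $\ell$ and a chart $D_{+}(t_{i})$, and identify $\mc{O}_{\P_{\Z}(\rho)}(\ell)|_{D_{+}(t_{i})}$ with the $(R_{t_{i}})_{0}$-module $(R_{t_{i}})_{\ell}$. Any degree-$\ell$ monomial $\prod_{j \ne i} t_{j}^{a_{j}} t_{i}^{c}$ satisfies $a_{j}\rho_{j} \equiv \ell \equiv b_{j}(\ell)\rho_{j} \pmod{d_{j}}$ for each $j \ne i$, hence $a_{j} \equiv b_{j}(\ell) \pmod{d_{j}}$, and since $0 \le b_{j}(\ell) < d_{j}$ in fact $a_{j} \ge b_{j}(\ell)$. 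Thus $\prod_{j \ne i} t_{j}^{b_{j}(\ell)}$ divides every element of $(R_{t_{i}})_{\ell}$; as $t_{i}^{b_{i}(\ell)}$ is a unit on $D_{+}(t_{i})$ and $R$ is a domain, multiplication by $t^{\mathbf{b}(\ell)} = \prod_{i} t_{i}^{b_{i}(\ell)}$ is an isomorphism $(R_{t_{i}})_{\ell'} \xrightarrow{\sim} (R_{t_{i}})_{\ell}$, and these glue to $\mc{O}_{\P_{\Z}(\rho)}(\ell') \simeq \mc{O}_{\P_{\Z}(\rho)}(\ell)$. Since $\ell' \equiv \ell - b_{i}(\ell)\rho_{i} \equiv 0 \pmod{d_{i}}$ for every $i$ and $\lcm(d_{0},\dotsc,d_{n}) = s$, we obtain $s \mid \ell'$. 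Finally, for $s \mid \ell'$ the divisibility argument of the previous paragraph (now with $\ell'$ in place of $0$, using $d_{j} \mid \ell'$) shows that $\psi$ maps $(R'_{t_{i}'})_{\ell'/s}$ onto $(R_{t_{i}})_{\ell'}$, semilinearly over the isomorphism $(R'_{t_{i}'})_{0} \xrightarrow{\sim} (R_{t_{i}})_{0}$; gluing gives $\varphi_{\ast}\mc{O}_{\P_{\Z}(\rho)}(\ell') \simeq \mc{O}_{\P_{\Z}(\rho')}(\ell'/s)$, and composing with $\mc{O}_{\P_{\Z}(\rho)}(\ell') \simeq \mc{O}_{\P_{\Z}(\rho)}(\ell)$ gives the stated description of $\varphi_{\ast}\mc{O}_{\P_{\Z}(\rho)}(\ell)$. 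Taking $\ell = s\ell_{0}$, so that $b_{i}(s\ell_{0}) = 0$ because $d_{i} \mid s$, and applying $\varphi^{\ast}$ yields \labelcref{0005-02}.

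The step I expect to require the most care is the isomorphism $\mc{O}_{\P_{\Z}(\rho)}(\ell') \simeq \mc{O}_{\P_{\Z}(\rho)}(\ell)$ itself: because the sheaves $\mc{O}_{\P_{\Z}(\rho)}(\ell)$ need not be invertible, this is not a matter of multiplying by an invertible section, and the isomorphism holds only because the exponent congruences $a_{j} \equiv b_{j}(\ell) \pmod{d_{j}}$ force $\prod_{j \ne i} t_{j}^{b_{j}(\ell)}$ to divide every monomial in the relevant graded piece on each chart --- a purely combinatorial input. Everything else (handling ties in the $p$-adic valuations, verifying that $R$ is $R'$-free, and checking that the chartwise morphisms and module isomorphisms agree on overlaps) is routine.
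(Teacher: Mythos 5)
Your proof is correct. The paper offers no proof of this lemma, deferring entirely to the cited references (Delorme, Al-Amrani); your chart-by-chart monomial-divisibility argument on $D_{+}(t_{i})$ is essentially the standard argument from those sources, and the supporting arithmetic you extract ($d_{i}s_{i}=s$, $\lcm(d_{0},\dotsc,d_{n})=s$, $\gcd(d_{i},\rho_{i})=1$, and the $p$-adic verification of \labelcref{0005-01}) all checks out.
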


\begin{pg} \label{0023} By \Cref{0005}, all weighted projective lines $\P_{\Z}(q_{0},q_{1})$ are isomorphic to $\P_{\Z}^{1}$; thus, for \Cref{0001}, we may assume $n \ge 2$. \end{pg}

\begin{definition} \cite[\S2]{ALAMRANI-CDIEGDPDFPT} We say that $\rho$ satisfies (N) if $\gcd(\{\rho_{j}\}_{j \ne i}) = 1$ for all $i$. \end{definition}

\begin{pg} \label{0024} \cite[\S8]{ALAMRANI-CDIEGDPDFPT} Let $\rho,\sigma$ be two weight vectors satisfying (N). We have $\P_{\Z}(\rho) \simeq \P_{\Z}(\sigma)$ if and only if $\rho$ is a permutation of $\sigma$. \end{pg}

\begin{lemma} \label{0012} The sheaf $\mc{O}_{\P_{\Z}(\rho)}(r)$ is reflexive for any $r \in \Z$. If $\rho$ satisfies (N), the sheaf $\mc{O}_{\P_{\Z}(\rho)}(r)$ is invertible if and only if $\lcm(\rho)$ divides $r$. \end{lemma}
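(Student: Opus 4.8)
The plan is to work locally on the standard affine charts of $\P_{\Z}(\rho)$ and analyze the graded modules $R(r)$ over the homogeneous localizations. First I would recall that $\P_{\Z}(\rho) = \Proj R$ is covered by the charts $D_{+}(t_{i}) = \Spec R_{(t_{i})}$, where $R_{(t_{i})}$ is the degree-$0$ part of $R_{t_{i}}$; on this chart the sheaf $\mc{O}_{\P_{\Z}(\rho)}(r)$ is $\widetilde{R(r)_{(t_{i})}}$, i.e. the degree-$r$ part of $R_{t_{i}}$ viewed as an $R_{(t_{i})}$-module. For the reflexivity claim, the key observation is that $R(r)_{(t_{i})}$ is a finitely generated torsion-free module over the integrally closed domain $R_{(t_{i})}$ (the latter is a localization of an affine toric ring, hence normal), and it has depth $\ge 2$ wherever the ambient ring does. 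More precisely, I would use the standard criterion that a coherent sheaf $\mc{F}$ on a normal scheme is reflexive if and only if $\mc{F} \simeq j_{\ast} j^{\ast} \mc{F}$ for $j$ the inclusion of the complement of a closed subset of codimension $\ge 2$; since $\mc{O}_{\P_{\Z}(\rho)}(r)$ is visibly the pushforward of an invertible (hence reflexive) sheaf from the smooth locus — which is the complement of the finitely many singular points, a set of codimension $\ge 2$ once $n \ge 2$ — reflexivity follows. (For $n = 1$, by \Cref{0023} the space is just $\P^{1}_{\Z}$ and every $\mc{O}(r)$ is invertible, in particular reflexive, so that case is trivial.)

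For the invertibility criterion, suppose $\rho$ satisfies (N). If $\lcm(\rho)$ divides $r$, write $r = \lcm(\rho) \cdot m$; then $\mc{O}_{\P_{\Z}(\rho)}(\lcm(\rho))$ is invertible — this is the standard fact that the $\lcm(\rho)$-th twist is a line bundle, which one checks chart-by-chart since on $D_{+}(t_{i})$ the module $R(\lcm(\rho))_{(t_{i})}$ is free of rank $1$, generated by $t_{i}^{\lcm(\rho)/\rho_{i}}$ (here $\rho_{i} \mid \lcm(\rho)$ always) — and then $\mc{O}(r) = \mc{O}(\lcm(\rho))^{\otimes m}$ is invertible as well. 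Conversely, suppose $\mc{O}_{\P_{\Z}(\rho)}(r)$ is invertible. On the chart $D_{+}(t_{i})$, the module $R(r)_{(t_{i})}$ is generated by the monomials $t^{a} t_{i}^{-k}$ with $\deg(t^{a}) = r + k\rho_{i}$; invertibility of the sheaf means this module is locally free of rank $1$, and since $D_{+}(t_{i})$ has a distinguished point (the image of the coordinate axis, where $t_{j} = 0$ for $j \ne i$) whose local ring is the relevant graded localization, freeness there forces $r$ to be a multiple of $\rho_{i}$ — more carefully, one uses that $R(r)_{(t_{i})}$ is generated by a single element over $R_{(t_{i})}$ precisely when $r \equiv 0 \pmod{\rho_{i}}$, which I would verify by a degree/monomial count. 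Running this over all $i$ gives $\rho_{i} \mid r$ for every $i$, hence $\lcm(\rho) \mid r$.

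The main obstacle, and the step I would be most careful about, is the converse direction of the invertibility criterion: showing that local freeness of $\widetilde{R(r)_{(t_{i})}}$ at the relevant point actually forces $\rho_{i} \mid r$, rather than merely forcing some weaker divisibility after accounting for the reduction-of-weights isomorphisms. The hypothesis (N) is exactly what is needed here — without it, e.g. on $\P(2,2)\simeq \P^1$, the sheaf $\mc{O}(1)$ is invertible even though $\lcm(\rho)=2 \nmid 1$ — so I would make sure the argument genuinely uses that $\gcd(\{\rho_{j}\}_{j\ne i}) = 1$; concretely, this coprimality guarantees that on $D_{+}(t_{i})$ there exist monomials of every sufficiently large degree not divisible by $\rho_i$ that are "new" generators, obstructing rank-$1$ freeness unless $\rho_i \mid r$. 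Once that local computation is pinned down, assembling the global statement from the affine charts is routine.
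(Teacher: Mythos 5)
The paper states \Cref{0012} without proof (it is a standard fact about weighted projective spaces, found in the cited references of Delorme, Dolgachev, and Al~Amrani), so there is no in-paper argument to compare against. Your chart-by-chart strategy is the standard one, and the easy direction (if $\lcm(\rho)\mid r$ then $R(r)_{(t_i)}$ is free on $t_i^{r/\rho_i}$) is correct as written, modulo checking that $\mc{O}(\lcm(\rho))^{\otimes m}\to\mc{O}(m\lcm(\rho))$ is an isomorphism on these generators (this deserves a word in view of \Cref{0017}). However, the two substantive steps are asserted rather than proved. For reflexivity, the claim that $\mc{O}_{\P_{\Z}(\rho)}(r)$ is ``visibly'' $j_{\ast}j^{\ast}$ of its restriction to the invertible locus is precisely the content to be established, since the criterion you invoke characterizes reflexive sheaves as such pushforwards: you must show that sections of $\widetilde{R(r)_{(t_i)}}$ extend over closed subsets of codimension $\ge 2$. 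This does hold --- for instance because $R(r)_{(t_i)}=(R[t_i^{-1}])_{r}$ is a graded direct summand of the normal domain $R[t_i^{-1}]$, and sections of $\mc{O}$ on $\Spec R[t_i^{-1}]$ extend over the preimage of such a subset, which again has codimension $\ge 2$ --- but some such argument is required. (Two smaller corrections here: the non-locally-free locus is not ``finitely many points'' over $\Z$, nor over a field once $n\ge 3$; and its codimension-$\ge 2$-ness needs no hypothesis on $\rho$, since a torsion-free rank-one coherent sheaf on a normal scheme is automatically invertible at every codimension-one point. So no appeal to \Cref{0005} or to (N) is needed for the first assertion.)

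More seriously, the converse of the invertibility criterion --- the actual content of the second assertion --- is deferred to ``a degree/monomial count,'' and the heuristic you offer (monomials of every large degree not divisible by $\rho_i$) does not obviously produce the proof. What must be shown is: under (N), if $R(r)_{(t_i)}$ is free of rank one at the torus-fixed point $P_i$ of $D_{+}(t_i)$ (over a geometric point of the base), then $\rho_i\mid r$. A clean way to close this: identify the completed local ring at $P_i$ with the invariants of $\mu_{\rho_i}$ acting on $k[[x_j]]_{j\ne i}$ with weights $\rho_j \bmod \rho_i$, under which $\mc{O}(r)$ becomes the module of semi-invariants of weight $r$. Condition (N) guarantees this action is free in codimension one (if some $1\ne\zeta\in\mu_{\rho_i}$ fixed a divisor, its order would divide $\gcd(\{\rho_j\}_{j\ne j_0})$ for some $j_0$), so the semi-invariant module is invertible if and only if the character is trivial, i.e. $\rho_i\mid r$; equivalently and more elementarily, (N) implies that for each $j_0\ne i$ there is a degree-$r$ monomial in $R[t_i^{-1}]$ not involving $t_{j_0}$, so a single local generator would have to divide a unit times a power of $t_i$, forcing $r\in\rho_i\Z$. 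Running this over all $i$ then gives $\lcm(\rho)\mid r$ as you intend. Until this local computation is actually carried out, the proposal establishes only the easy implication.
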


\begin{lemma}[Picard group of $\P(\rho)$] \label{0002} \cite[\S6.1]{ALAMRANI-CDIEGDPDFPT} For any connected locally Noetherian scheme $S$, the map \[ \Z \oplus \Pic(S) \to \Pic(\P_{S}(\rho)) \] sending \[ (\ell,\mc{L}) \mapsto \mc{O}_{\P_{S}(\rho)}(\ell \cdot \lcm(\rho)) \otimes f_{S}^{\ast}\mc{L} \] is an isomorphism. (See also \cite[\S6]{NOOHI-PSOAWPS}.) \end{lemma}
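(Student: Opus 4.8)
The plan is to reduce to the case where $\rho$ satisfies (N), establish the statement over a Noetherian local base, and then globalize by a Leray spectral sequence argument for $f_{S} : \P_{S}(\rho) \to S$. For the reduction: dividing $\rho$ by $\gcd(\rho)$ (\Cref{0022}) scales both the twist index and the $\lcm$ by $\gcd(\rho)$, so we may assume $\gcd(\rho) = 1$; then \Cref{0005} provides an isomorphism $\varphi : \P_{\Z}(\rho) \xrightarrow{\sim} \P_{\Z}(\rho')$ with $\rho'$ satisfying (N), $\varphi^{\ast}\mc{O}_{\P(\rho')}(\ell) \simeq \mc{O}_{\P(\rho)}(s\ell)$ by \labelcref{0005-02}, and $\lcm(\rho) = s\lcm(\rho')$ by \labelcref{0005-01}; hence an isomorphism of the asserted shape for $\rho'$ pulls back to one for $\rho$, and we may assume $\rho$ satisfies (N). Then \Cref{0012} makes the map well-defined. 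Injectivity is elementary: restricting an isomorphism $\mc{O}(\ell\lcm(\rho)) \otimes f_{S}^{\ast}\mc{L} \simeq \mc{O}_{X}$ to a fibre $\P_{\kappa(s)}(\rho)$ forces $\mc{O}_{\P_{\kappa(s)}(\rho)}(\ell\lcm(\rho)) \simeq \mc{O}$, hence $\ell = 0$ by comparing global sections (which form $\kappa(s)[t_{0},\dotsc,t_{n}]$ in degree $\ell\lcm(\rho)$, of unbounded dimension for $\ell > 0$ and zero for $\ell < 0$); then $f_{S}^{\ast}\mc{L} \simeq \mc{O}_{X}$, and applying $(f_{S})_{\ast}$ and using $\Gamma(\P_{T}(\rho),\mc{O}) = \Gamma(T,\mc{O})$ for all schemes $T$ gives $\mc{L} \simeq \mc{O}_{S}$.

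For the local case, let $A$ be a Noetherian local ring with maximal ideal $\mf{m}$ and residue field $k$. Over $k$, the divisor class group of the toric variety $\P_{k}(\rho)$ (see \Cref{0004}) is $\Z \cdot [\mc{O}(1)]$, and by \Cref{0012} its subgroup of invertible classes is $\Z \cdot [\mc{O}(\lcm(\rho))]$, so $\Pic(\P_{k}(\rho)) \cong \Z$ (alternatively one descends this from the weighted projective stack $[(\A^{n+1} \setminus \{0\})/\G_{m}]$, whose Picard group is manifestly $\Z$). Now $H^{1}(\P_{k}(\rho),\mc{O}) = 0$, as for ordinary projective space, so the groups $H^{1}(\P_{k}(\rho),\mc{O}) \otimes_{k} (\mf{m}^{j}/\mf{m}^{j+1})$ that control the kernels of $\Pic(\P_{A/\mf{m}^{j+1}}(\rho)) \to \Pic(\P_{A/\mf{m}^{j}}(\rho))$ all vanish; combining this with Grothendieck's existence theorem applied to the completion $\hat{A}$ and with faithfully flat descent along $A \to \hat{A}$, the restriction $\Pic(\P_{A}(\rho)) \to \Pic(\P_{k}(\rho))$ is injective. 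Since $\mc{O}(\lcm(\rho))$ restricts to a generator, $\ell \mapsto \mc{O}(\ell\lcm(\rho))$ defines an isomorphism $\Z \xrightarrow{\sim} \Pic(\P_{A}(\rho))$.

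For general connected locally Noetherian $S$, I would feed this into the Leray spectral sequence $H^{p}(S,R^{q}(f_{S})_{\ast}\G_{m}) \Rightarrow H^{p+q}(\P_{S}(\rho),\G_{m})$ on the small étale site. Here $(f_{S})_{\ast}\G_{m} = \G_{m}$ because $\P(\rho)$ is geometrically integral with $(f_{T})_{\ast}\mc{O}_{\P_{T}(\rho)} = \mc{O}_{T}$ for every scheme $T$; and $R^{1}(f_{S})_{\ast}\G_{m} \simeq \underline{\Z}$, since $\P(\rho) \to S$ is finitely presented, so the stalk at a geometric point $\bar{s}$ is $\varinjlim_{S'} \Pic(\P_{S'}(\rho)) = \Pic(\P_{A}(\rho))$ with $A = \mc{O}_{S,\bar{s}}^{\mr{sh}}$, which is $\Z$ by the local case, and the global class $\mc{O}(\lcm(\rho))$ trivializes this local system. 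The resulting five-term exact sequence $0 \to \Pic(S) \xrightarrow{f_{S}^{\ast}} \Pic(\P_{S}(\rho)) \to \Gamma(S,\underline{\Z}) \xrightarrow{d_{2}} H^{2}(S,\G_{m})$ has $\Gamma(S,\underline{\Z}) = \Z$ since $S$ is connected, and $\mc{O}(\lcm(\rho))$ maps onto its generator $1$; hence $d_{2} = 0$ and the sequence is short exact, split by $\ell \mapsto \mc{O}(\ell\lcm(\rho))$ — which is exactly the isomorphism in the statement. The main obstacle is the local case: upgrading the class-group computation over $k$ across infinitesimal, then complete, then general Noetherian local bases, where the vanishing $H^{1}(\P_{k}(\rho),\mc{O}) = 0$ and formal GAGA do the work.
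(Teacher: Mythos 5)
Your proof is correct, but it takes a genuinely different route from the paper's. After the reduction to weights satisfying (N) via \Cref{0022} and \Cref{0005} (which you carry out exactly as the paper does, including the compatibility $\lcm(\rho)=s\cdot\lcm(\rho')$ and $\varphi^{\ast}\mc{O}(\ell)\simeq\mc{O}(s\ell)$), the paper's proof is a one-line citation of Al Amrani's computation for (N)-weights, whereas you supply a self-contained argument: the closed-fibre computation via the toric class group together with \Cref{0012}, the passage to a Noetherian local base using $\H^{1}(\P_{k}(\rho),\mc{O})=0$ and base change from \Cref{0003} plus Grothendieck existence and descent along $A\to\hat{A}$, and finally the Leray spectral sequence for $f_{S}$ with $\mb{R}^{1}f_{S\ast}\G_{m}\simeq\underline{\Z}$ split by the global class $\mc{O}(\lcm(\rho))$. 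This is actually closer in spirit to the techniques the paper deploys elsewhere (the deformation-theoretic argument of \Cref{20190128-05} and the Leray argument of \Cref{0009}), and it buys independence from the external reference at the cost of length. Two minor imprecisions worth fixing: for fixed $\ell>0$ the rank of $\H^{0}(\P_{\kappa(s)}(\rho),\mc{O}(\ell\lcm(\rho)))$ is finite but at least $n+1>1$, which is all you need (it is not ``unbounded''); and the injectivity of $\Pic(\P_{A}(\rho))\to\Pic(\P_{\hat{A}}(\rho))$ is not literally faithfully flat descent of objects but the standard fact that a line bundle $L$ on a proper $A$-scheme with $\H^{0}(\mc{O})=A$ is trivial if and only if the pairing $\H^{0}(L)\otimes\H^{0}(L^{\vee})\to A$ hits a unit, which is detected after the faithfully flat local homomorphism $A\to\hat{A}$.
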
 \begin{proof} By \Cref{0022}, we may assume $\gcd(\rho) = 1$. In \cite{ALAMRANI-CDIEGDPDFPT} the desired claim is proved assuming that $\rho$ satisfies (N). If $\rho$ does not satisfy (N), then we conclude using \labelcref{0005-01} and \labelcref{0005-02}. \end{proof}

\begin{lemma}[Cohomology of $\mc{O}_{\P(\rho)}(\ell)$] \label{0003} \cite[\S3]{DELORME-ESPACESPROJECTIFSANISOTROPES} Let $A$ be a ring and set $X := \P_{A}(\rho)$. \begin{enumerate} \item For $\ell \ge 0$, the $A$-module $\H^{0}(X,\mc{O}_{X}(\ell))$ is free with basis consisting of monomials $t_{0}^{e_{0}} \dotsb t_{n}^{e_{n}}$ such that $e_{0},\dotsc,e_{n} \in \Z_{\ge 0}$ and $\rho_{0}e_{0} + \dotsb + \rho_{n}e_{n} = \ell$. \item For $\ell < 0$, the $A$-module $\H^{n}(X,\mc{O}_{X}(\ell))$ is free with basis consisting of monomials $t_{0}^{e_{0}} \dotsb t_{n}^{e_{n}}$ such that $e_{0},\dotsc,e_{n} \in \Z_{< 0}$ and $\rho_{0}e_{0} + \dotsb + \rho_{n}e_{n} = \ell$. \item If $(i,\ell) \not\in (\{0\} \times \Z_{\ge 0}) \cup (\{n\} \times \Z_{< 0})$, then $\H^{i}(X,\mc{O}_{X}(\ell)) = 0$. \item For any $A$-module $M$ and any $(i,\ell)$, the canonical map \[ \H^{i}(X,\mc{O}_{X}(\ell)) \otimes_{A} M \to \H^{i}(X,\mc{O}_{X}(\ell) \otimes_{A} M) \] is an isomorphism. \end{enumerate} \end{lemma}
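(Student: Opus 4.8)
The plan is to compute all the cohomology groups simultaneously with the \v{C}ech complex of the standard affine open cover $\mathfrak{U} = \{D_{+}(t_{i})\}_{i=0}^{n}$ of $X$. Write $R := A[t_{0},\dotsc,t_{n}]$, graded by $\deg t_{i} = \rho_{i}$, so $X = \Proj R$. Because every $\rho_{i}$ is positive, the irrelevant ideal is $R_{+} = (t_{0},\dotsc,t_{n})$, so $\mathfrak{U}$ is genuinely an open cover of $X$; each finite intersection $D_{+}(t_{i_{0}}\dotsb t_{i_{p}}) = \Spec (R_{t_{i_{0}}\dotsb t_{i_{p}}})_{0}$ is affine, and $X$ is separated, so for the quasi-coherent sheaves $\mc{O}_{X}(\ell) = \widetilde{R(\ell)}$ and $\mc{O}_{X}(\ell)\otimes_{A}M$ the \v{C}ech cohomology of $\mathfrak{U}$ agrees with sheaf cohomology. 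I would assemble the twists: let $C^{\bullet}$ be the $\Z$-graded $A$-complex with $C^{p} = \bigoplus_{i_{0}<\dotsb<i_{p}} R_{t_{i_{0}}\dotsb t_{i_{p}}}$ and the alternating-sum differential. Its degree-$\ell$ summand $C^{\bullet}_{\ell}$ is the \v{C}ech complex of $\mc{O}_{X}(\ell)$, so that $\H^{i}(X,\mc{O}_{X}(\ell)) \cong \H^{i}(C^{\bullet}_{\ell})$, while $C^{\bullet}_{\ell}\otimes_{A}M$ is the \v{C}ech complex of $\mc{O}_{X}(\ell)\otimes_{A}M$.

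The heart of the argument is a combinatorial splitting of $C^{\bullet}$. Each $R_{t_{i_{0}}\dotsb t_{i_{p}}}$ is free over $A$ on the monomials $t^{a} := t_{0}^{a_{0}}\dotsb t_{n}^{a_{n}}$ with $a\in\Z^{n+1}$ and $a_{j}\ge 0$ for $j\notin\{i_{0},\dotsc,i_{p}\}$; this promotes the $\Z$-grading (by $a\mapsto\sum_{j}\rho_{j}a_{j}$) to a $\Z^{n+1}$-grading preserved by the differential, so $C^{\bullet} = \bigoplus_{a\in\Z^{n+1}} C^{\bullet}_{(a)}$ with each $C^{\bullet}_{(a)}$ finite. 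Fix $a$ and set $Z := \{j : a_{j}<0\}$; then $t^{a}$ lies in $R_{t_{I}}$ exactly when $Z\subseteq I$, so $C^{p}_{(a)} = \bigoplus_{|I|=p+1,\ Z\subseteq I} A$. Three cases: if $Z = \emptyset$ (i.e.\ $a\ge 0$), $C^{\bullet}_{(a)}$ is the simplicial cochain complex of the $n$-simplex, with cohomology $A$ in degree $0$ and $0$ otherwise; if $Z = \{0,\dotsc,n\}$ (i.e.\ $a<0$), $C^{\bullet}_{(a)}$ is $A$ sitting in degree $n$; and if $\emptyset\subsetneq Z\subsetneq\{0,\dotsc,n\}$, then writing $I = Z\sqcup J$ with $J\subseteq\{0,\dotsc,n\}\setminus Z$ identifies $C^{\bullet}_{(a)}$, up to shift by $|Z|$, with the \emph{augmented} simplicial cochain complex of the simplex on the vertex set $\{0,\dotsc,n\}\setminus Z$, which admits an explicit $A$-linear contracting homotopy (cone off a vertex) and so is acyclic. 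Summing over the $a$ with $\sum_{j}\rho_{j}a_{j} = \ell$ then yields (1), (2), and the vanishing $\H^{i}(X,\mc{O}_{X}(\ell)) = 0$ for $0<i<n$; part (3) follows at once, since with all $\rho_{j}>0$ the equation $\sum_{j}\rho_{j}a_{j}=\ell$ has no solution $a\ge 0$ when $\ell<0$ and none $a<0$ when $\ell\ge 0$.

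For (4): the monomial bases just obtained exhibit every $\H^{i}(X,\mc{O}_{X}(\ell))$ as a free $A$-module, so $C^{\bullet}_{\ell}$ is a bounded complex of free $A$-modules with free cohomology and is therefore homotopy equivalent to its cohomology (equipped with the zero differential); tensoring with an arbitrary $A$-module $M$ then commutes with taking cohomology, which is exactly the isomorphism $\H^{i}(C^{\bullet}_{\ell}\otimes_{A}M)\cong\H^{i}(C^{\bullet}_{\ell})\otimes_{A}M$ in (4). I expect no serious obstacle: the second paragraph is formally identical to the classical computation for $\P^{n}$ (the weights enter only through the coarser $\Z$-grading, not the differentials), and the only point I would be careful about is the identification of \v{C}ech with derived-functor cohomology when $A$ fails to be Noetherian, which I would settle by the general vanishing of quasi-coherent cohomology on affine schemes rather than by anything special to $X$.
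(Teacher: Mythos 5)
Your argument is correct. The paper gives no proof of this lemma, citing Delorme [\S 3] instead, and your \v{C}ech computation --- decomposing the total complex of the standard cover by multidegree, identifying each graded piece with a (possibly augmented, shifted) simplicial cochain complex of a simplex, and then splitting the resulting bounded complex of free modules with free cohomology to get the base-change statement (4) --- is precisely the standard argument underlying that reference.
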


\begin{remark} Since $\P_{\Z}(\rho) \to \Spec \Z$ is projective, if $S$ is quasi-compact and admits an ample line bundle, then so does $\P_{S}(\rho)$; thus $\Br = \Br'$ for $\P_{S}(\rho)$ by Gabber \cite{DEJONG-GABBER} (i.e. the Azumaya Brauer group coincides with its cohomological Brauer group). \end{remark}

\begin{remark} The projection $\P_{\Z}(\rho) \to \Spec \Z$ is a flat morphism of relative dimension $n$, and its geometric fibers are normal. By \cite[1.3.3. (iii)]{DOLGACHEV-WPV1982}, we have that $\P_{S}(\rho) \to S$ is smooth if and only if $\P_{S}(\rho) \simeq \P_{S}^{n}$. If $\rho$ satisfies (N), then \Cref{0024} implies that $\P_{S}(\rho) \simeq \P_{S}^{n}$ if and only if $\rho = (1,\dotsc,1)$. \end{remark}

\section{Over an algebraically closed field} \label{sec-02}

In this section, we prove \Cref{0001} in the case when $S = \Spec k$ for an algebraically closed field $k$.

\begin{pg}[Presentation as a toric variety] \label{0004} We recall from \cite[\S2.2]{FULTON-ITTV1993}, \cite[Example 3.1.17]{CLS-TORIC} how to view a weighted projective space as a toric variety (i.e. what the fan is). \par Let $\mr{U} \in \GL_{n+1}(\Z)$ be an invertible matrix which has $\rho$ as its 1st row (using the Euclidean algorithm, do column operations on $\rho$ to reduce to $(1,0,\dotsc,0)$, then apply the inverse column operations in the reverse order on the identity matrix $\id_{n+1}$); let $\mr{Y} \in \Mat_{(n+1) \times n}(\Z)$ be the matrix obtained by removing the leftmost column of $\mr{U}^{-1}$; let $\mb{v}_{0},\dotsc,\mb{v}_{n} \in \Z^{n}$ be the rows of $\mr{Y}$; then $\P(\rho)$ is isomorphic to the toric variety associated to the fan $\Delta$ whose maximal cones are generated by the $n$-element subsets of $\{\mb{v}_{0},\dotsc,\mb{v}_{n}\}$. \end{pg}

\begin{pg}[Reduce to computing the subgroup of Zariski-locally trivial Brauer classes] \label{0007} Let $\Delta' \to \Delta$ be a nonsingular subdivision of $\Delta$, and let $X'$ be the toric variety associated to $\Delta'$. The map $X' \to X$ is a toric resolution of singularities for $X$. As in \cite{DEMEYERFORDMIRANDA-TCBGOATV}, we set $\H^{2}(K/X_{\et},\G_{m}) := \ker(\H_{\et}^{2}(X,\G_{m}) \to \H_{\et}^{2}(K,\G_{m}))$; since $X'$ is regular, the restriction $\H_{\et}^{2}(X',\G_{m}) \to \H_{\et}^{2}(K,\G_{m})$ is injective; hence there is an exact sequence \[ 0 \to \H^{2}(K/X_{\et},\G_{m}) \to \H_{\et}^{2}(X,\G_{m}) \to \H_{\et}^{2}(X',\G_{m}) \] of abelian groups. Here $X'$ is a smooth, proper, geometrically connected, rational $k$-scheme; hence $\H_{\et}^{2}(X',\G_{m}) = 0$ by birational invariance of the Brauer group (see \cite[Corollaire (7.3)]{Gro68c} in characteristic $0$ and \cite[Corollary 5.2.6]{COLLIOTTHELENESKOROBOGATOV-TBGG2019} in general); thus it remains to compute $\H^{2}(K/X_{\et},\G_{m})$. By \cite[4.3, 5.1]{DEMEYERFORDMIRANDA-TCBGOATV}, there are natural isomorphisms \begin{align} \label{0007-01} \check{\H}^{2}(\mf{U},\G_{m}) \simeq \H_{\zar}^{2}(X,\G_{m}) \simeq \H^{2}(K/X_{\et},\G_{m}) \end{align} where $\mf{U} = \{U_{\sigma_{0}},\dotsc,U_{\sigma_{n}}\}$ is the Zariski cover of $X$ corresponding to the set of maximal cones of $\Delta$. \end{pg}

\begin{pg}[Limit of dilations] \label{0008} Let $A$ be a ring and let $X$ be the toric variety (over $A$) associated to a fan $\Delta$ of cones in $\ml{N}_{\Q}$. For any positive integer $d$, the multiplication-by-$d$ map $\times d : \ml{N} \to \ml{N}$ induces a finite $A$-morphism \[ \theta_{d} : X \to X \] which is equivariant for the $d$th power map on tori. This is called a \emph{dilation} \cite[\S6]{CORTINASHAESEMEYERWALKERWEIBEL-TKTOTV2009} (or \emph{toric Frobenius} \cite[Remark 4.14]{HERINGMUSTATAPAYNE-PFTVB2008}). For a cone $\sigma$ of $\Delta$, this is the $A$-algebra endomorphism of $\Gamma(U_{\sigma},\mc{O}_{U_{\sigma}}) = A[\sigma^{\vee} \cap \ml{M}]$ sending $\chi^{\ml{m}} \mapsto \chi^{d\ml{m}}$ for $\ml{m} \in \sigma^{\vee} \cap \ml{M}$. If the fan $\Delta$ is smooth, then $\theta_{d}$ is flat for any $d$. \par We view $\N$ as a category whose objects correspond to positive integers $m \in \N$ and there is a morphism $m_{1} \to m_{2}$ if $m_{1}$ divides $m_{2}$. Let $S \subset \N$ be a multiplicatively closed subset; there is a functor $S^{\op} \to (\Sch)$ sending $m \mapsto X$ and $\{m_{1} \to m_{2}\} \mapsto \theta_{m_{2}/m_{1}}$; the limit \[ \textstyle X^{1/S} := \varprojlim (\theta_{m_{2}/m_{1}} : X \to X) \] of the resulting projective system is representable by a scheme since all the transition maps are affine. The scheme $X^{1/S}$ is isomorphic to the monoid scheme obtained by the usual construction with the finite free $\Z$-module $\ml{N}$ and its dual $\ml{M}$ replaced by the $S^{-1}\Z$-module $S^{-1}\ml{N}$ and its dual $S^{-1}\ml{M} = \Hom_{S^{-1}\Z}(S^{-1}\ml{N},S^{-1}\Z)$. More precisely, set $U_{\sigma}^{1/S} := \Spec A[\sigma^{\vee} \cap S^{-1}\ml{M}]$; for any face $\tau$ of $\sigma$, the canonical map $U_{\tau}^{1/S} \to U_{\sigma}^{1/S}$ is an open immersion; then $U_{\sigma_{1}}^{1/S}$ and $U_{\sigma_{2}}^{1/S}$ are glued along the common open subscheme $U_{\sigma_{1} \cap \sigma_{2}}^{1/S}$. \par If $A$ is reduced, then we have \begin{align} \label{0008-02} \Gamma(U_{\sigma},\G_{m}) = (A[\sigma^{\vee} \cap \ml{M}])^{\times} = A^{\times} \cdot (\sigma^{\perp} \cap \ml{M}) \end{align} for any cone $\sigma \in \Delta$; hence, by \labelcref{0007-01}, the pullback \[ \theta_{d}^{\ast} : \H_{\zar}^{p}(X,\G_{m}) \to \H_{\zar}^{p}(X,\G_{m}) \] is multiplication-by-$d$. In the limit, we obtain a natural isomorphism \begin{align} \label{0008-01} \textstyle S^{-1}(\H_{\zar}^{p}(X,\G_{m})) \simeq \H_{\zar}^{p}(X^{1/S},\G_{m}) \end{align} of $S^{-1}\Z$-modules. \end{pg}

\begin{lemma} \label{0010} Let $d$ be a positive integer dividing $\rho_{i}$, and set $\rho' := (\rho_{0}',\dotsc,\rho_{n}')$ where $\rho_{i}' := \rho_{i}/d$ and $\rho_{j}' := \rho_{j}$ for $j \ne i$. If $d \in S$, then $\P_{\Z}(\rho)^{1/S} \simeq \P_{\Z}(\rho')^{1/S}$. \end{lemma}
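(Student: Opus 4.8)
The plan is to prove the statement at the level of toric data, using the description of dilation limits from \Cref{0008}. Recall from \Cref{0004} (cf.\ also \cite[Example 3.1.17]{CLS-TORIC}) that $\P_{\Z}(\rho)$ is the toric variety with character lattice $\ml{M}_{\rho} := \{m \in \Z^{n+1} : \sum_{j} m_{j}\rho_{j} = 0\}$ whose maximal cones $\sigma_{0},\dotsc,\sigma_{n}$ are determined by $\sigma_{j}^{\vee} \cap \ml{M}_{\rho} = \{m \in \ml{M}_{\rho} : m_{k} \ge 0 \text{ for all } k \ne j\}$, so that $U_{\sigma_{j}} = D_{+}(t_{j})$. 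By \Cref{0008}, the scheme $\P_{\Z}(\rho)^{1/S}$ is obtained by gluing, as in that paragraph, the affine charts $\Spec \Z[\sigma_{j}^{\vee} \cap S^{-1}\ml{M}_{\rho}]$ for $j = 0,\dotsc,n$, where $S^{-1}\ml{M}_{\rho} = \{m \in (S^{-1}\Z)^{n+1} : \sum_{j} m_{j}\rho_{j} = 0\}$ and $\sigma_{j}^{\vee} \cap S^{-1}\ml{M}_{\rho} = \{m \in S^{-1}\ml{M}_{\rho} : m_{k} \in S^{-1}\Z_{\ge 0} \text{ for } k \ne j\}$, with $S^{-1}\Z_{\ge 0} := \{a/s : a \in \Z_{\ge 0},\, s \in S\}$ (a submonoid of $S^{-1}\Z$, as $S \subseteq \N$). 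In particular $\P_{\Z}(\rho)^{1/S}$ is built out of the toric data $(S^{-1}\ml{M}_{\rho},\{\sigma_{j}\}_{j=0}^{n})$ in a manner visibly functorial for isomorphisms of such data; the same description applies verbatim to $\rho'$.

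The key step is to produce an isomorphism of this datum for $\rho$ and $\rho'$. Since $d \in S$, the integer $d$ is a unit of $S^{-1}\Z$, so the $S^{-1}\Z$-linear map $\Phi \colon (S^{-1}\Z)^{n+1} \to (S^{-1}\Z)^{n+1}$ multiplying the $i$th coordinate by $d$ and fixing the others is an automorphism, with inverse multiplying the $i$th coordinate by $d^{-1}$. Because $\rho_{i} = d\rho_{i}'$ and $\rho_{j} = \rho_{j}'$ for $j \ne i$, one has $\sum_{j}(\Phi m)_{j}\rho_{j}' = \sum_{j} m_{j}\rho_{j}$ for every $m$, so $\Phi$ restricts to an isomorphism $S^{-1}\ml{M}_{\rho} \xrightarrow{\sim} S^{-1}\ml{M}_{\rho'}$. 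Moreover $\Phi$ and $\Phi^{-1}$ only rescale coordinates by the positive units $d$ and $d^{-1}$, hence preserve the conditions ``$m_{k} \in S^{-1}\Z_{\ge 0}$''; so $\Phi$ carries $\sigma_{j}^{\vee} \cap S^{-1}\ml{M}_{\rho}$ onto $\sigma_{j}^{\vee} \cap S^{-1}\ml{M}_{\rho'}$ for each $j$. Thus $\Phi$ gives an isomorphism $(S^{-1}\ml{M}_{\rho},\{\sigma_{j}\}_{j}) \xrightarrow{\sim} (S^{-1}\ml{M}_{\rho'},\{\sigma_{j}\}_{j})$ of toric data.

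Applying the functoriality recorded in the first paragraph, $\Phi$ induces compatible ring isomorphisms $\Z[\sigma_{j}^{\vee} \cap S^{-1}\ml{M}_{\rho}] \cong \Z[\sigma_{j}^{\vee} \cap S^{-1}\ml{M}_{\rho'}]$ respecting the gluing, hence an isomorphism of schemes $\P_{\Z}(\rho)^{1/S} \simeq \P_{\Z}(\rho')^{1/S}$, as desired. I expect the only genuine work to lie in the first paragraph, namely carefully unwinding \Cref{0004} and \Cref{0008} to arrive at the toric description of $\P_{\Z}(\rho)^{1/S}$ recorded there, and in particular at the clean ``$m_{k} \ge 0$ for $k \ne j$'' form of the cones; once this is in hand the map $\Phi$ does everything, and the hypothesis $d \in S$ enters only through the invertibility of $d$ in $S^{-1}\Z$ — which is also why one must pass to the limit, since $\P_{\Z}(\rho)$ and $\P_{\Z}(\rho')$ are generally not isomorphic.
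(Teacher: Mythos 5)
Your proof is correct and is essentially the paper's argument transported to the dual lattice: where the paper rescales the $i$th ray generator of the fan in $S^{-1}\ml{N}$ by the unit $d$ (packaged as the matrix manipulations $\mr{U} \rightsquigarrow \mr{U}^{\circ}$ and $\mr{V}''$ in the presentation of \Cref{0004}), you rescale the $i$th coordinate of the character lattice $\ml{M}_{\rho} = \ker(\rho) \subset \Z^{n+1}$, and in both cases the content is that $d$ becomes a unit in $S^{-1}\Z$, so the rescaling is a lattice automorphism matching the two toric data cone by cone. The only step left to write down --- which you correctly flag --- is the identification of your charts $\Spec \Z[\sigma_{j}^{\vee} \cap S^{-1}\ml{M}_{\rho}]$ with the charts $U_{\sigma_{j}}^{1/S}$ of \Cref{0008} built from \Cref{0004}; this is the standard observation that $\mr{U}$ identifies $\ker(\rho)$ with $0 \oplus \Z^{n}$ compatibly with the pairing against the rows of $\mr{Y}$.
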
 \begin{proof} As in \Cref{0004}, let $\mr{U},\mr{U}' \in \GL_{n+1}(\Z)$ be invertible matrices whose first rows are $\rho,\rho'$ respectively. Let $\mr{U}^{\circ} \in \GL_{n+1}(S^{-1}\Z)$ be the matrix obtained by dividing the $i$th column of $\mr{U}$ by $d$; then $(\mr{U}^{\circ})^{-1}$ is obtained by multiplying the $i$th row of $\mr{U}^{-1}$ by $d$; this does not change the cones since we are just replacing $\mr{v}_{i}'$ by $\frac{1}{d}\mr{v}_{i}'$. Set $\mr{V} := \mr{U}' \cdot (\mr{U}^{\circ})^{-1} \in \GL_{n+1}(S^{-1}\Z)$; since the first rows of $\mr{U}^{\circ},\mr{U}'$ are the same, the matrix $\mr{V}$ has the form \[ \mr{V} = \BM{ 1 & \mb{0} \\ \mr{V}' & \mr{V}''} \] for some $\mr{V}' \in \Mat_{n \times 1}(S^{-1}\Z)$ and $\mr{V}'' \in \GL_{n}(S^{-1}\Z)$. Let $\mr{Y}^{\circ},\mr{Y}' \in \Mat_{(n+1) \times n}(S^{-1}\Z)$ be the matrices obtained by removing the leftmost column of $(\mr{U}^{\circ})^{-1},(\mr{U}')^{-1}$ respectively; then $(\mr{U}')^{-1} \cdot \mr{V} = (\mr{U}^{\circ})^{-1}$ implies $\mr{Y}' \cdot \mr{V}'' = \mr{Y}^{\circ}$; then $\mr{V}'' : S^{-1}\ml{N} \to S^{-1}\ml{N}$ induces the desired isomorphism $\P_{\Z}(\rho)^{1/S} \to \P_{\Z}(\rho')^{1/S}$. \end{proof}

\begin{pg} \label{0011} We show that $\H_{\zar}^{2}(X,\G_{m}) = 0$ by showing that the localizations $\H_{\zar}^{2}(X,\G_{m}) \otimes_{\Z} \Z_{(p)}$ are $0$ for every prime $p$. By \labelcref{0008-01} and \Cref{0010}, we may thus assume that \[ \rho = (1,p^{e_{1}},\dotsc,p^{e_{n}}) \] for some nonnegative integers $e_{1} \le \dotsb \le e_{n}$. In this case, in \Cref{0004} we may take the 1st row of $\mr{U}$ to be $\rho$ and the other rows to coincide with the identity $\id_{n+1}$, so that \begin{align} \label{0011-01} \mr{Y} = \BM{-p^{e_{1}} & \dotsb & -p^{e_{n}} \\ 1 & & \\ & \ddots & \\ & & 1} \end{align} and thus $\mb{v}_{0} = (-p^{e_{1}},\dotsc,-p^{e_{n}})$ and $\mb{v}_{i}$ is the $i$th standard basis vector of $\Z^{n}$. \end{pg}

\begin{pg}[Definition of $\mathsf{A}^{\bullet,\bullet}$] For convenience, we set $[n] := \{0,1,\dotsc,n\}$; we will use $I$ to denote a subset of $[n]$. We construct a double complex \[ (\{\mathsf{A}^{p,q}\},\{\mathsf{d}_{\mr{v}}^{p,q} : \mathsf{A}^{p,q} \to \mathsf{A}^{p,q+1}\},\{\mathsf{d}_{\mr{h}}^{p,q} : \mathsf{A}^{p,q} \to \mathsf{A}^{p+1,q}\}) \] as follows: for $-1 \le p \le n$, we set \begin{equation} \label{20171113-41-eqn03} \begin{aligned} \mathsf{A}^{p,1} &= \textstyle \bigoplus_{|I| = n-p} \Z^{n-p} \\ \mathsf{A}^{p,0} &= \textstyle \bigoplus_{|I| = n-p} \Z^{n} \end{aligned} \end{equation} and $\mathsf{A}^{p,q} = 0$ if $(p,q) \not\in \{-1,\dotsc,n\} \times \{0,1\}$. \par For the vertical differential $\mathsf{d}_{\mr{v}}^{p,0} : \mathsf{A}^{p,0} \to \mathsf{A}^{p,1}$, the $I$th component (with $|I| = n-p$) of this map is the group homomorphism $\Z^{n} \to \Z^{n-p}$ whose corresponding matrix has rows $\mb{v}_{i}$ for $i \in I$. \par The horizontal differentials $\mathsf{d}_{\mr{h}}^{p,q}$ are defined with the sign conventions as follows: if $I = \{i_{0},\dotsc,i_{n-p-1}\} \subset [n]$ is a subset of size $|I| = n-p$ and $I'$ is obtained by removing the $i$th element of $I$ (where $0 \le i \le n-p-1$), then the restriction from the $I$th to $I'$th components has sign $(-1)^{i}$. \par The subcomplex of $A^{\bullet,\bullet}$ obtained by restricting to $p \ge 1$ is isomorphic to the morphism of Cech complexes $\check{\mr{C}}^{\bullet}(\Delta,\mc{M}) \to \check{\mr{C}}^{\bullet}(\Delta,\mc{SF})$, in the notation of \cite[(5.0.1)]{DEMEYERFORDMIRANDA-TCBGOATV}. \end{pg}

\begin{pg}[Diagram of $\mathsf{A}^{\bullet,\bullet}$] Here is a diagram of the double complex $\mathsf{A}^{\bullet,\bullet}$: \begin{center} \begin{tikzpicture}[>=angle 90] 
\matrix[matrix of math nodes,row sep=3em, column sep={5em,between origins}, text height=2ex, text depth=0.0ex] { 
|[name=11]| \mathsf{A}^{-1,1} & |[name=12]| \mathsf{A}^{0,1} & |[name=13]| \mathsf{A}^{1,1} & |[name=14]| \mathsf{A}^{2,1} & |[name=15]| \dotsb & |[name=16]| \mathsf{A}^{n-1,1} & |[name=17]| \mathsf{A}^{n,1} \\ 
|[name=21]| \mathsf{A}^{-1,0} & |[name=22]| \mathsf{A}^{0,0} & |[name=23]| \mathsf{A}^{1,0} & |[name=24]| \mathsf{A}^{2,0} & |[name=25]| \dotsb & |[name=26]| \mathsf{A}^{n-1,0} & |[name=27]| \mathsf{A}^{n,0} \\ 
}; 
\draw[->,font=\scriptsize] (11) edge node[above=0pt] {$\mathsf{d}_{\mr{h}}^{-1,1}$} (12) (12) edge node[above=0pt] {$\mathsf{d}_{\mr{h}}^{0,1}$} (13) (13) edge node[above=0pt] {$\mathsf{d}_{\mr{h}}^{1,1}$} (14) (14) edge (15) (15) edge (16) (16) edge node[above=0pt] {$\mathsf{d}_{\mr{h}}^{n-1,1}$} (17) (21) edge node[below=0pt] {$\mathsf{d}_{\mr{h}}^{-1,1}$} (22) (22) edge node[below=0pt] {$\mathsf{d}_{\mr{h}}^{0,1}$} (23) (23) edge node[below=0pt] {$\mathsf{d}_{\mr{h}}^{1,1}$} (24) (24) edge (25) (25) edge (26) (26) edge node[below=0pt] {$\mathsf{d}_{\mr{h}}^{n-1,0}$} (27) (21) edge node[left=0pt] {$\mathsf{d}_{\mr{v}}^{-1,0}$} (11) (22) edge node[left=0pt] {$\mathsf{d}_{\mr{v}}^{0,0}$} (12) (23) edge node[left=0pt] {$\mathsf{d}_{\mr{v}}^{1,0}$} (13) (24) edge node[left=0pt] {$\mathsf{d}_{\mr{v}}^{2,0}$} (14) (26) edge node[left=0pt] {$\mathsf{d}_{\mr{v}}^{n-1,0}$} (16) (27) edge node[left=0pt] {$\mathsf{d}_{\mr{v}}^{n,0}$} (17); \end{tikzpicture} \end{center} For a weighted projective surface (i.e. $n=2$), this looks like \begin{center} \begin{tikzpicture}[>=angle 90] 
\matrix[matrix of math nodes,row sep=6em, column sep={9em,between origins}, text height=1.7ex, text depth=0.5ex] { 
|[name=11]| \Z^{3} & |[name=12]| \Z^{2} \oplus \Z^{2} \oplus \Z^{2} & |[name=13]| \Z^{1} \oplus \Z^{1} \oplus \Z^{1} & |[name=14]| 0 \\ 
|[name=21]| \Z^{2} & |[name=22]| \Z^{2} \oplus \Z^{2} \oplus \Z^{2} & |[name=23]| \Z^{2} \oplus \Z^{2} \oplus \Z^{2} & |[name=24]| \Z^{2} \\ 
}; 
\draw[->,font=\scriptsize] (21) edge node[fill=white] {$\begin{bmatrix} \mb{v}_{0} \\ \mb{v}_{1} \\ \mb{v}_{2} \end{bmatrix}$} (11) (22) edge node[fill=white] {$\begin{bmatrix} \mb{v}_{0} \\ \mb{v}_{1} \end{bmatrix} \begin{bmatrix} \mb{v}_{0} \\ \mb{v}_{2} \end{bmatrix} \begin{bmatrix} \mb{v}_{1} \\ \mb{v}_{2} \end{bmatrix}$} (12) (23) edge node[fill=white] {$\begin{bmatrix} \mb{v}_{0} \end{bmatrix} \begin{bmatrix} \mb{v}_{1} \end{bmatrix} \begin{bmatrix} \mb{v}_{2} \end{bmatrix}$} (13) (24) edge (14) (11) edge (12) (12) edge (13) (13) edge (14) (21) edge (22) (22) edge (23) (23) edge (24); \end{tikzpicture} \end{center} \end{pg}

\begin{pg} Let $\mb{C}_{n}^{\bullet}$ be the complex with $\mb{C}_{n}^{k} = \Z^{\binom{n}{k}}$ and such that the differentials $\mb{C}_{n}^{k} \to \mb{C}_{n}^{k+1}$ have sign conventions as above. Then $\mb{C}_{n}^{\bullet}$ is isomorphic to a direct sum of shifts of $\id : \Z^{n} \to \Z^{n}$, hence is exact. The complex $\mathsf{A}^{\bullet,0}$ is isomorphic to the direct sum $(\mb{C}_{n+1}^{\bullet})^{n}$, hence is exact. The complex $\mathsf{A}^{\bullet,1}$ is isomorphic to the direct sum $(\mb{C}_{n-1}^{\bullet})^{n+1}$, hence is exact. Let \[ (\{\mr{E}_{r}^{p,q}\},\{\mr{d}_{r}^{p,q} : \mr{E}_{r}^{p,q} \to \mr{E}_{r}^{p+r,q-r+1}\}) \] denote the spectral sequence corresponding to the horizontal filtration on $\mathsf{A}^{\bullet,\bullet}$, so that $\mr{E}_{0}^{p,q} = \mathsf{A}^{p,q}$ and $\mr{d}_{0}^{p,q} = \mathsf{d}_{\mr{v}}^{p,q}$. Then $\mr{E}_{2}^{p,0}$ is identified with our Cech cohomology groups $\check{\H}^{p}(\mf{U},\G_{m})$, where $\mf{U}$ is the Zariski open cover of $X$ corresponding to the maximal cones of $\Delta$. Since there are only two nonzero rows, the differentials \[ \mr{d}_{2}^{p,1} : \mr{E}_{2}^{p,1} \to \mr{E}_{2}^{p+2,0} \] are isomorphisms for all $p$. We are interested in $\check{\H}^{2}(\mf{U},\G_{m}) \simeq \mr{E}_{2}^{2,0} \simeq \mr{E}_{2}^{0,1}$. \end{pg}

\begin{pg} For the differential $\mathsf{d}_{\mr{v}}^{0,0} : \mathsf{A}^{0,0} \to \mathsf{A}^{0,1}$, the $I$th component (with $|I| = n$) of this map is the group homomorphism $\Z^{n} \to \Z^{n}$ whose corresponding matrix is obtained by removing the $i$th rows from $\mr{Y}$ \labelcref{0011-01} for $i \not\in I$; hence \begin{align} \label{20171113-41-eqn01} \textstyle \mr{E}_{1}^{0,1} \simeq \bigoplus_{i \in [n]} \Z/(p^{e_{i}}) \end{align} where a generator of the $i$th component $\Z/(p^{e_{i}})$ is given by the image of the $1$st standard basis vector of $\Z^{n}$ (see \labelcref{20171113-41-eqn03}). \par For the differential $\mathsf{d}_{\mr{v}}^{1,0} : \mathsf{A}^{1,0} \to \mathsf{A}^{1,1}$, the $I$th component (with $|I| = n-1$) of this map is the group homomorphism $\Z^{n} \to \Z^{n-1}$ whose corresponding matrix is obtained by removing the $i$th rows from $\mr{Y}$ \labelcref{0011-01} for $i \not\in I$; hence \begin{align} \label{20171113-41-eqn02} \textstyle \mr{E}_{1}^{1,1} \simeq \bigoplus_{i_{1} < i_{2}} \Z/(p^{\min\{e_{i_{1}},e_{i_{2}}\}}) \end{align} where a generator of the $i$th component $\Z/(p^{\min\{e_{i_{1}},e_{i_{2}}\}})$ is given by the image of the $1$st standard basis vector of $\Z^{n-1}$ (see \labelcref{20171113-41-eqn03}). \end{pg}

\begin{pg} We compute $\mr{E}_{2}^{0,1} = \ker\mr{d}_{1}^{0,1}/\im\mr{d}_{1}^{-1,1}$. With identifications as in \labelcref{20171113-41-eqn01} and \labelcref{20171113-41-eqn02}, the image of $(x_{0},x_{1},\dotsc,x_{n}) \in \mr{E}_{1}^{0,1}$ under the differential $\mr{d}_{1}^{0,1} : \mr{E}_{1}^{0,1} \to \mr{E}_{1}^{1,1}$ has $(i_{1},i_{2})$th coordinate $(-1)^{i_{1}}x_{i_{1}}+(-1)^{i_{2}-1}x_{i_{2}}$. Suppose $(x_{0},x_{1},\dotsc,x_{n}) \in \ker\mr{d}_{1}^{0,1}$; using the differential $\mr{d}_{1}^{-1,1} : \mr{E}_{1}^{-1,1} \to \mr{E}_{1}^{0,1}$, we may assume that $x_{n} = 0$ in $\Z/(p^{e_{n}})$. Since $e_{n-1} \le e_{n}$, the condition $(-1)^{n-1}x_{n-1} + (-1)^{n-1}x_{n} = 0$ in $\Z/(p^{e_{n-1}})$ forces $x_{n-1} = 0$ in $\Z/(p^{e_{n-1}})$. Using downward induction on $i$, we conclude that $x_{i} = 0$ in $\Z/(p^{e_{i}})$ for all $i$. Thus $\mr{E}_{2}^{0,1} = 0$. \end{pg}

\begin{remark}[Assumptions on the base field] In \cite{DEMEYERFORDMIRANDA-TCBGOATV}, it seems there are two implicit assumptions regarding the base field $k$: \begin{enumerate} \item It is assumed that $k$ is algebraically closed. This is used to conclude that all closed points are $k$-points and to identify the henselization and the strict henselization at a closed point of a variety. In the proof of Lemma 4.1, the reference to \cite[VIII, \S13, Theorem 32]{ZARISKI-SAMUEL} (in showing that an affine toric variety is analytically normal) requires $k$ to be perfect (here we may also use \cite[(33.I) Theorem 79]{MATSUMURA-CA}). \item It is assumed that $k$ has characteristic $0$. This is used to conclude that (5.1.1) is a split surjection; we only use their Lemmas 4.3 and 5.1, which do not depend on the characteristic of $k$. There are potential subtleties when considering the Brauer group of (affine) toric varieties in positive characteristic; for example, if $k$ is an algebraically closed field of characteristic $p$, the Brauer group of $k[x_{1},x_{2}]$ has nontrivial $p$-torsion \cite[7.5]{AUSLANDER-GOLDMAN-BRAUER}, and these are not cup products (since $\H_{\fppf}^{1}(\A_{k}^{2},\mu_{p}) = 0$). \end{enumerate} \end{remark}

\section{Over a general base scheme}

The following lemma \Cref{0006}, combined with \Cref{sec-02}, proves \Cref{0001} when $S = \Spec k$ for an arbitrary field $k$.

\begin{lemma} \label{0006} Let $A$ be a local ring, set $X := \P_{A}(\rho)$, let $P \in X(A)$ be an $A$-rational point and let $\alpha \in \H_{\et}^{2}(X,\G_{m})$ be a class such that $\alpha_{P} = 0$. If there exists a finite faithfully flat $A$-algebra $A'$ such that $\alpha_{A'} = 0$, then $\alpha = 0$. \end{lemma}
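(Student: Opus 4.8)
The plan is to analyze $\H_{\et}^{2}(X,\G_{m})$ via the Leray spectral sequence for $f_{X}\colon X\to\Spec A$, using the section $P$ to split off the image of $f_{X}^{\ast}$ and then to annihilate the complementary piece by pulling back along the surjection $\Spec A'\to\Spec A$. First I would reduce to the case that $A$ is Noetherian: writing $A$ as a filtered colimit of its finitely generated $\Z$-subalgebras localized at the preimage of $\mathfrak{m}_{A}$, the scheme $X$, the class $\alpha$, the point $P$, and the cover $A'$ all descend to some Noetherian local ring in the system (using that $\P(\rho)$ is of finite presentation, that $\H_{\et}^{2}(-,\G_{m})$ commutes with the relevant limits, and that finite faithfully flat algebras spread out), and the hypothesis $\alpha_{A'}=0$ descends as well.

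Assuming $A$ Noetherian, \Cref{0003} (with $\ell=0$) gives $f_{X\ast}\G_{m}=\G_{m}$, and \Cref{0002} gives $R^{1}f_{X\ast}\G_{m}\simeq\underline{\Z}$: the stalk at a geometric point over $s\in\Spec A$ is $\Pic(\P_{\mathcal{O}_{A,s}^{\mathrm{sh}}}(\rho))=\Z$, a generator being the restriction of the genuine line bundle $\mathcal{O}_{X}(\lcm(\rho))$, so the global line bundle $\mathcal{O}_{X}(\lcm(\rho))$ trivializes $R^{1}f_{X\ast}\G_{m}$. In particular $\H^{1}(\Spec A,R^{1}f_{X\ast}\G_{m})=\H^{1}(\Spec A,\underline{\Z})=0$ since $\Spec A$ is connected. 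The section $P$ makes $f_{X}^{\ast}\colon\H_{\et}^{2}(\Spec A,\G_{m})\to\H_{\et}^{2}(X,\G_{m})$ split injective with retraction $P^{\ast}$; feeding the two computations above into the exact sequence of low-degree terms of the Leray spectral sequence (and noting that the relevant $d_{2}$ vanishes because $f_{X}^{\ast}$ is injective) one obtains
\[ f_{X}^{\ast}\,\H_{\et}^{2}(\Spec A,\G_{m}) \;=\; \ker\big(\H_{\et}^{2}(X,\G_{m}) \to \H^{0}(\Spec A,R^{2}f_{X\ast}\G_{m})\big), \]
the map on the right being the edge homomorphism.

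Let $\bar{\alpha}$ denote the image of $\alpha$ under this edge homomorphism; it suffices to prove $\bar{\alpha}=0$, for then $\alpha=f_{X}^{\ast}\beta$ with $\beta=P^{\ast}f_{X}^{\ast}\beta=P^{\ast}\alpha=0$, whence $\alpha=0$. Write $\phi\colon\Spec A'\to\Spec A$ and $f'\colon X_{A'}\to\Spec A'$ for the base changes of $\mathrm{id}$ and $f_{X}$. By flat base change for the proper morphism $f_{X}$ we have $\phi^{\ast}R^{2}f_{X\ast}\G_{m}\simeq R^{2}f'_{\ast}\G_{m}$, and the edge homomorphisms for $f_{X}$ and $f'$ are compatible with this identification; since the edge class of $\alpha_{A'}=0$ is $0$, we get $\phi^{\ast}\bar{\alpha}=0$ in $\H^{0}(\Spec A',\phi^{\ast}R^{2}f_{X\ast}\G_{m})$. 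As $\phi$ is faithfully flat it is surjective, so every geometric point $\bar{s}$ of $\Spec A$ lifts to a geometric point $\bar{s}'$ of $\Spec A'$, and therefore $\bar{\alpha}_{\bar{s}}=(\phi^{\ast}\bar{\alpha})_{\bar{s}'}=0$. Since an \'etale sheaf is separated with respect to its stalks at geometric points, $\bar{\alpha}=0$, as needed.

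The step I expect to be most delicate is the identification $R^{1}f_{X\ast}\G_{m}\simeq\underline{\Z}$, which requires the reduction to the Noetherian case (so that \Cref{0002} is applicable, in particular at the strict henselizations of $A$) together with the identification of $R^{1}f_{X\ast}\G_{m}$ with the relative Picard sheaf via the section; one must also take care that flat base change is being invoked for the non-torsion sheaf $\G_{m}$ rather than for a torsion coefficient sheaf. Everything else is formal — and in fact only the flatness and surjectivity of $\Spec A'\to\Spec A$, not its finiteness, enter the argument.
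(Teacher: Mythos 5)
Your argument has a genuine gap at its central step: the assertion that $\phi^{\ast}R^{2}f_{X\ast}\G_{m}\simeq R^{2}f'_{\ast}\G_{m}$ by ``flat base change for the proper morphism $f_{X}$.'' No such base change theorem is available here: proper base change requires torsion coefficients, and smooth base change requires $\phi$ to be smooth; for the non-torsion sheaf $\G_{m}$ and a finite flat (typically non-\'etale) $\phi$ one only has the canonical comparison map, and its injectivity is exactly what is at stake. Concretely, at a geometric point $\bar{s}'$ over $\bar{s}$ the comparison map on stalks is the restriction $\H_{\et}^{2}(\P_{B}(\rho),\G_{m})\to\H_{\et}^{2}(\P_{B'}(\rho),\G_{m})$, where $B=\mc{O}_{S,\bar{s}}^{\mr{sh}}$ and $B'$ is a finite faithfully flat local $B$-algebra; so all your argument yields is that $\alpha$ restricted to $\P_{B}(\rho)$ dies after a finite faithfully flat extension of the strictly henselian local ring $B$ --- which is precisely an instance of the lemma you are trying to prove. (You cannot appeal to \Cref{20190128-05} to kill these stalks outright: that lemma is proved later and its proof uses \Cref{0006}.) You flag the torsion issue yourself in the last paragraph, but it is not a technicality to be taken care of; it is the entire content of the statement. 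A secondary problem: $\H_{\et}^{1}(\Spec A,\underline{\Z})=0$ does not follow from connectedness and is false for general local rings (the local ring at the node of a nodal curve carries a nontrivial $\Z$-torsor, namely the restriction of the infinite chain of $\P^{1}$'s), and since \Cref{0006} concerns all of $\H_{\et}^{2}(X,\G_{m})$ rather than just its torsion, you cannot invoke torsion-freeness of $\H^{1}(S,\underline{\Z})$ to dodge this the way \Cref{0009} does.

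For contrast, the paper's proof avoids the Leray spectral sequence entirely and argues by descent: it takes the $\G_{m}$-gerbe $\mc{G}\to X$ of $\alpha$ and a $1$-twisted line bundle $\mc{L}'$ on $\mc{G}_{A'}$, and descends $\mc{L}'$ along $A\to A'$. The descent obstructions live in $\Pic(X_{A''})$ and $\Gamma(X_{A'''},\G_{m})$, and by \Cref{0002} the three pullbacks $\Pic(X_{A''})\to\Pic(X_{A'''})$ coincide, so the line-bundle obstruction vanishes for free; the remaining unit-valued $2$-cocycle is killed using the section $P$. If you want to salvage a spectral-sequence proof, you would still need an independent argument at the strictly henselian stalks, which is where the real difficulty of the lemma lives.
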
 \begin{proof} Let $\mc{G} \to X$ be the $\G_{m}$-gerbe corresponding to $\alpha$. Since $\mc{G}_{A'}$ is trivial, there is a 1-twisted line bundle $\mc{L}'$ on $\mc{G}_{A'}$; set $A'' := A' \otimes_{A} A'$ and $A''' := A' \otimes_{A} A' \otimes_{A} A'$; then there exists a line bundle $L''$ on $X_{A''}$ such that $L''|_{\mc{G}_{A''}} \simeq (p_{1}^{\ast}\mc{L}')^{-1} \otimes p_{2}^{\ast}\mc{L}'$; this line bundle $L''$ satisfies $p_{13}^{\ast}L'' \simeq p_{23}^{\ast}L'' \otimes p_{12}^{\ast}L''$; hence $L''$ is trivial since $p_{12}^{\ast},p_{13}^{\ast},p_{23}^{\ast} : \Pic(X_{A''}) \to \Pic(X_{A'''})$ are the same maps $\Z \to \Z$ (see \Cref{0002}). Choose an isomorphism $\varphi : p_{1}^{\ast}\mc{L}' \to p_{2}^{\ast}\mc{L}'$ of $\mc{O}_{\mc{G}_{A''}}$-modules; the isomorphisms $p_{13}^{\ast}\varphi$ and $p_{23}^{\ast}\varphi \circ p_{12}^{\ast}\varphi$ differ by an element $u_{\alpha} \in \Gamma(X_{A'''},\G_{m}) \simeq \Gamma(A''',\G_{m})$. Since $\mc{G}|_{P}$ is trivial, we may refine the finite flat cover $A \to A'$ if necessary so that $u_{\alpha}$ is the coboundary of some $u_{\beta} \in \Gamma(X_{A''},\G_{m})$. After modifying $\varphi$ by this $u_{\beta}$, we have that the descent datum $(\mc{L}',\varphi)$ gives a 1-twisted line bundle on $\mc{G}$. \end{proof}

We use deformation theory of twisted sheaves to deduce \Cref{0001} over strictly henselian local rings:

\begin{lemma} \label{20190128-05} Let $A$ be a strictly henselian local ring. Then $\H_{\et}^{2}(\P_{A}(\rho),\G_{m}) = 0$. \end{lemma}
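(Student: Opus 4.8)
The plan is to reduce to the case where $A$ is the strict henselization of a local ring essentially of finite type over $\Z$ or over a field, then to use a deformation-theoretic argument (following Mathur) to deform a trivialization of $\alpha \in \H_{\et}^{2}(\P_{A}(\rho),\G_{m})$ from the closed fiber to all of $\Spec A$. First I would note that $\H_{\et}^{2}(-,\G_{m})$ commutes with filtered colimits of rings along flat transition maps, so it suffices to treat $A$ strictly henselian local and \emph{Noetherian}, indeed the strict henselization of a finitely generated $\Z$-algebra localized at a prime; write $\mf{m}$ for its maximal ideal, $k = A/\mf{m}$ the (separably, hence algebraically, closed) residue field, and $A_{r} = A/\mf{m}^{r+1}$ the truncations. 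By \Cref{0006} (applied with any $A$-rational point $P$, which exists since $\P_{A}(\rho)$ has a section given by $t_{0} = \dotsb = t_{n-1} = 0$ when $\rho$ satisfies (N), or more simply by \Cref{0005} together with the coordinate point construction) and the case of an algebraically closed field proved in \Cref{sec-02}, we know $\alpha$ restricts to $0$ on the closed fiber $\P_{k}(\rho)$; the point $P$ takes care of the rational-point hypothesis in \Cref{0006} at each infinitesimal stage. So the content is to promote the trivialization over $\P_{k}(\rho)$ to a trivialization over $\P_{A}(\rho)$.

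The key steps: (1) Let $\mc{G} \to \P_{A}(\rho)$ be the $\G_{m}$-gerbe associated to $\alpha$. A trivialization of $\alpha$ is the same as a $1$-twisted line bundle on $\mc{G}$. We have such a bundle $\mc{L}_{0}$ on $\mc{G}_{k} = \mc{G} \times_{A} k$. (2) Deform $\mc{L}_{0}$ step by step along $A_{r+1} \twoheadrightarrow A_{r}$: the obstruction to deforming a line bundle on $\mc{G}_{r} := \mc{G}\times_A A_r$ to $\mc{G}_{r+1}$ lies in $\H_{\et}^{2}(\P_{k}(\rho), \mf{m}^{r+1}/\mf{m}^{r+2} \otimes_k \mc{O})= \H^2(\P_k(\rho),\mc{O})\otimes_k \mf m^{r+1}/\mf m^{r+2}$, which vanishes by \Cref{0003}(3) (the cohomology of $\mc{O}_{\P(\rho)}(0)$ is concentrated in degree $0$ since $n \ge 2$, cf. \Cref{0023}); and the torsor of such deformations is governed by $\H^{1}(\P_k(\rho),\mc{O})\otimes_k\mf m^{r+1}/\mf m^{r+2}$, which also vanishes by \Cref{0003}(3). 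Hence at each stage the $1$-twisted line bundle deforms, uniquely. (3) Assemble: the compatible system $(\mc{L}_{r})_{r\ge 0}$ of $1$-twisted line bundles on $(\mc{G}_{r})$ is, by the Grothendieck existence / Tannaka duality theorem of Hall--Rydh \cite{HALLRYDH-CTDAAOHS2014} for coherent functors on the (proper, since $\P_A(\rho)\to\Spec A$ is proper) algebraic stack $\mc{G}$, algebraizable: it comes from a $1$-twisted line bundle $\widehat{\mc{L}}$ on $\mc{G}\times_A \widehat{A}$, the formal/$\mf m$-adic completion. (4) Finally descend from $\widehat{A}$ back to $A$: by Artin approximation (or, since $A$ is already henselian and Noetherian, by the flatness of $A\to\widehat A$ plus \Cref{0006} once more with $A' = \widehat A$ — wait, $\widehat A$ is not finite over $A$), more precisely one uses that $\alpha_{\widehat A}=0$ together with the fact that $\H^2_{\et}(-,\G_m)$ of a proper scheme over a henselian local ring is detected after completion; Artin approximation lets us find an étale-local (hence, by henselianness, honest) trivialization over $A$. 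So $\alpha = 0$.

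The main obstacle I expect is step (4): passing from a trivialization over the completion $\widehat A$ to one over $A$ itself. The infinitesimal deformation theory (steps 1--3) is clean because the relevant coherent cohomology of $\P_k(\rho)$ in degrees $1$ and $2$ vanishes by \Cref{0003}; and Hall--Rydh handles the formal-to-adic algebraization on the gerbe. But $A\to\widehat A$ is faithfully flat and \emph{not} finite, so \Cref{0006} does not directly apply, and one instead needs an approximation argument: realize $\mc{G}$, $\P_A(\rho)$, and the trivialization over $\widehat A$ as pulled back from a finite-type $\Z$-subalgebra, then invoke Artin approximation to descend the existence of a $1$-twisted line bundle (a finite-type, hence "coherent", condition) from $\widehat A$ to $A$ — this uses precisely that $A$ is a henselian local ring essentially of finite type. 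Care is needed to phrase "there exists a $1$-twisted line bundle" as an approximable functor, but this is exactly the kind of argument carried out in Mathur's thesis \cite{MATHUR-EC20190726}, which I would cite for the structure of the whole deformation-plus-approximation package.
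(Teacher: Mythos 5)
Your proposal is correct and follows essentially the same route as the paper: trivialize on the closed fiber via \Cref{0006} and the algebraically closed case, lift infinitesimally using the vanishing of $\H^{1}$ and $\H^{2}$ of $\mc{O}_{\P_{k}(\rho)}$ from \Cref{0003}, algebraize the compatible system with Hall--Rydh/Grothendieck existence, and descend from $\widehat{A}$ to $A$ by Artin approximation. The one point you elide is that the obstruction naturally lives in $\H_{\et}^{2}(\mc{G}_{r},\mf{m}^{r+1}\mc{O}_{\mc{G}_{r}})$ on the gerbe rather than on $\P_{k}(\rho)$ itself; the paper supplies the needed comparison as \Cref{20160903-37}.
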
 \begin{proof} \footnote{This is an argument of Siddharth Mathur \cite{MATHUR-EC20190726}.} By standard limit techniques, we may assume that $A$ is the strict henselization of a localization of a finite type $\Z$-algebra; in particular, $A$ is excellent \cite[5.6 iii)]{GRECO-TTOER1976}. Let $\mf{m}$ be the maximal ideal of $A$ and let $k := A/\mf{m}$ be the residue field. \par We first consider the case when $A$ is complete. Set $X := \P_{A}(\rho)$ and let $\pi : \mc{G} \to X$ be a $\G_{m}$-gerbe corresponding to a class $[\mc{G}] \in \H_{\et}^{2}(X,\G_{m})$. The class $[\mc{G}]$ is trivial if and only if $\pi$ admits a section. We have that $\mc{G}_{0}$ is a $\G_{m}$-gerbe over $X_{0} = \P_{k}(\rho)$, which is a trivial gerbe by \Cref{0006} since $k$ is separably closed. For $\ell \in \N$, set $X_{\ell} := X \times_{\Spec A} \Spec A/\mf{m}^{\ell+1}$ and $\mc{G}_{\ell} := \mc{G} \times_{X} X_{\ell}$. We have equivalences of categories \begin{align*} \Mor(X,\mc{G}) &\stackrel{1}{\simeq} \Hom_{r\otimes,\simeq}(\mathsf{Coh}(\mc{G}),\mathsf{Coh}(X)) \\ &\stackrel{2}{\simeq} \Hom_{r\otimes,\simeq}(\mathsf{Coh}(\mc{G}),\varprojlim\mathsf{Coh}(X_{\ell})) \\ &\stackrel{3}{\simeq} \varprojlim \Hom_{r\otimes,\simeq}(\mathsf{Coh}(\mc{G}),\mathsf{Coh}(X_{\ell})) \\ &\stackrel{1}{\simeq} \varprojlim \Mor(X_{\ell},\mc{G}) \end{align*} where the equivalences marked 1 are by \cite[Theorem 1.1]{HALLRYDH-CTDAAOHS2014} (here we use that $A$ is excellent), the equivalence marked 2 is Grothendieck existence \cite[$\text{III}_{1}$, 5.1.4]{EGA}, the equivalence marked 3 is \cite[Lemma 3.8]{HALLRYDH-CTDAAOHS2014}. \par It remains now to construct a compatible system of morphisms $X_{\ell} \to \mc{G}$. A morphism $X_{\ell} \to \mc{G}$ over $\P_{A}(\rho)$ corresponds to a 1-twisted line bundle on $\mc{G}_{\ell}$; the obstruction to lifting a line bundle via $\mc{G}_{\ell} \to \mc{G}_{\ell+1}$ lies in $\H_{\et}^{2}(\mc{G}_{\ell} , \mf{m}^{\ell}\mc{O}_{\mc{G}_{\ell}})$; by \Cref{20160903-37}, this is isomorphic to $\H_{\et}^{2}(X_{\ell},\mf{m}^{\ell}\mc{O}_{X_{\ell}})$, which is $0$ by \Cref{0003}. \par In general, if $A$ is not complete, we use Artin approximation to descend a 1-twisted line bundle from $\mc{G}^{\wedge}$ to $\mc{G}$. \end{proof}

\begin{lemma} \label{20160903-37} Let $X$ be an algebraic stack, let $\mr{A}$ be a finitely generated abelian group, let $\mb{G} = \mr{D}(\mr{A})$ be the diagonalizable group scheme associated to $\mr{A}$, and let $\pi : \mc{G} \to X$ be a $\mb{G}$-gerbe. For any quasi-coherent $\mc{O}_{X}$-module $F$, the pullback maps \begin{align} \label{20160903-37-eqn-01} \H_{\et}^{i}(X,F) \to \H_{\et}^{i}(\mc{G},\pi^{\ast}F) \end{align} are isomorphisms for all $i$. \end{lemma}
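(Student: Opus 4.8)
The plan is to reduce the statement to the well-known computation of the cohomology of a quasi-coherent sheaf pulled back along a gerbe banded by a diagonalizable group, by exploiting the weight decomposition. First I would recall that since $\mb{G} = \mr{D}(\mr{A})$ is diagonalizable, the category of quasi-coherent $\mc{O}_{\mc{G}}$-modules on a $\mb{G}$-gerbe $\pi : \mc{G} \to X$ decomposes as a direct sum indexed by the characters $\chi \in \mr{A}$: every quasi-coherent sheaf $\mc{F}$ on $\mc{G}$ splits canonically as $\mc{F} = \bigoplus_{\chi \in \mr{A}} \mc{F}_{\chi}$, where $\mc{F}_{\chi}$ is the subsheaf on which the inertial $\mb{G}$-action is via $\chi$. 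This decomposition is functorial and exact, hence it is respected by the pushforward $\pi_{\ast}$ and by all the higher direct images $\mathrm{R}^{j}\pi_{\ast}$. In particular, for the pullback $\pi^{\ast}F$ of a sheaf from $X$, the entire sheaf lies in the weight-$0$ summand, i.e. $(\pi^{\ast}F)_{\chi} = 0$ for $\chi \ne 0$ and $(\pi^{\ast}F)_{0} = \pi^{\ast}F$.

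Next I would show that $\pi$ is cohomologically trivial on the weight-$0$ part: concretely, that the adjunction unit $F \to \pi_{\ast}\pi^{\ast}F$ is an isomorphism and that $\mathrm{R}^{j}\pi_{\ast}(\pi^{\ast}F) = 0$ for $j > 0$. This is a local statement on $X$ (in the smooth, or even fppf, topology), so I may assume the gerbe is trivial, $\mc{G} \simeq X \times \mr{B}\mb{G}$; then $\pi$ is the projection, $\pi_{\ast}$ extracts the weight-$0$ part of the $\mr{B}\mb{G}$-representation, and $\mathrm{R}^{j}\pi_{\ast}$ computes group cohomology of $\mb{G}$ with coefficients in a representation — for a diagonalizable group scheme these higher cohomology groups vanish for $j>0$ on each weight space (the weight-$0$ part contributes only in degree $0$, being the invariants, and is acyclic because $\mb{G}$ is linearly reductive), and the weight-$0$ part of $\pi^{\ast}F$ is just $F$. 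Strictly speaking I should note that the local description above needs a smooth cover where the gerbe trivializes, but since $\mb{G}$ is smooth (being diagonalizable over a base where it is of finite type — or more generally flat, which suffices for the Leray argument) such a cover exists, and cohomology and base change for the flat, cohomologically-flat morphism $\pi$ lets me pass between the local and global statements.

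Finally I would feed this into the Leray spectral sequence $\H_{\et}^{i}(X, \mathrm{R}^{j}\pi_{\ast}\pi^{\ast}F) \Rightarrow \H_{\et}^{i+j}(\mc{G},\pi^{\ast}F)$: the vanishing of $\mathrm{R}^{j}\pi_{\ast}\pi^{\ast}F$ for $j>0$ collapses the spectral sequence, and the identification $\pi_{\ast}\pi^{\ast}F \simeq F$ then gives the isomorphisms \labelcref{20160903-37-eqn-01} for all $i$. The main obstacle is the bookkeeping of the weight decomposition in the relative setting — making sure that the decomposition of $\mathsf{QCoh}(\mc{G})$ is available when $\mc{G}$ is only a gerbe (not a global quotient) and that $\mathrm{R}^{j}\pi_{\ast}$ genuinely respects it — but this is standard once one works fppf-locally on $X$ where the gerbe trivializes, and the linear reductivity of $\mb{G} = \mr{D}(\mr{A})$ does the rest.
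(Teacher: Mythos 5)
Your proposal is correct and follows essentially the same route as the paper: both arguments run the Leray spectral sequence for $\pi$, reduce the vanishing of $\mb{R}^{q}\pi_{\ast}$ for $q>0$ to the locally trivial case $\mr{B}\mb{G}$, and use that pushforward along $\mr{B}\mb{G}$ amounts to taking the weight-zero (invariant) piece of an $\mr{A}$-graded module, which is exact because $\mb{G}$ is diagonalizable. The only cosmetic differences are that the paper packages the local vanishing via Alper's notion of cohomologically affine morphisms rather than the explicit weight decomposition, and trivializes the gerbe over strict henselizations (étale stalks) rather than over a general fppf cover; your parenthetical that $\mb{G}$ is smooth is not needed (and fails for $\mu_{p}$ in characteristic $p$), but your fallback to fppf-local triviality is exactly what makes the argument go through.
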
 \begin{proof} Set $\ms{F} := \pi^{\ast}F$. We first assume that $X$ is a scheme. We have a Leray spectral sequence \[ \mr{E}_{2}^{p,q} = \H_{\et}^{p}(X,\mb{R}^{q}\pi_{\ast}(\ms{F})) \Rightarrow \H_{\et}^{p+q}(\mc{G},\ms{F}) \] with differentials $\mr{E}_{2}^{p,q} \to \mr{E}_{2}^{p+2,q-1}$. Since $\pi_{\ast}\ms{F} \simeq F$, it suffices to show that $\mb{R}^{q}\pi_{\ast}(\ms{F}) = 0$ for $q \ge 1$. Here the stalks of $\mb{R}^{q}\pi_{\ast}(\ms{F})$ are the cohomology $\H_{\et}^{q}(\mr{B}\mb{G}_{A} , \ms{F}|_{A})$ for strictly henselian local rings $A := \mc{O}_{X,x}^{\mr{sh}}$. Set $\mc{G} := \mr{B}\mb{G}_{A}$; we show that if $\ms{F}$ is any quasi-coherent $\mc{O}_{\mc{G}}$-module then $\H^{p}_{\et}(\mc{G},\ms{F}) = 0$ for all $p > 0$. The category of quasi-coherent $\mc{O}_{\mc{G}}$-modules corresponds to the category $\mc{C}$ of $\Z$-graded $A$-modules. Denoting by $\pi : \mc{G} \to \Spec A$ the structure map, the pushforward functor $\pi_{\ast} : \mr{QCoh}(\mc{G}) \to \mr{QCoh}(A)$ corresponds to sending a $\Z$-graded module $M_{\bullet} = \bigoplus_{n \in \Z} M_{n}$ to the degree zero component $M_{0}$. Since this is an exact functor, we have that $\pi$ is cohomologically affine \cite[Definition 3.1]{ALPER-GMSFAS}. Since $\pi$ has affine diagonal, we have the desired result by \cite[Remark 3.5]{ALPER-GMSFAS}. \par In case $X$ is an algebraic space, let $U \to X$ be an etale surjection where $U$ is a scheme, and let $U^{p} := U \times_{X} \dotsb \times_{X} U$ be the $(p+1)$-fold fiber product; then we have descent spectral sequences \begin{align*} \mr{E}_{2}^{p,q} &= \H_{\et}^{q}(U^{p},F|_{U^{p}}) \Rightarrow \H_{\et}^{p+q}(X,F) \\ \mr{E}_{2}^{p,q} &= \H_{\et}^{q}(\mc{G}_{U^{p}},F|_{\mc{G}_{U^{p}}}) \Rightarrow \H_{\et}^{p+q}(\mc{G},\ms{F}) \end{align*} where, by the first paragraph, the pullback \[ \H_{\et}^{q}(U^{p},F|_{U^{p}}) \to \H_{\et}^{q}(\mc{G}_{U^{p}},F|_{\mc{G}_{U^{p}}}) \] is an isomorphism for all $p,q$ since each $U^{p}$ is a scheme; hence \labelcref{20160903-37-eqn-01} is an isomorphism. In case $X$ is an algebraic stack, we take $U \to X$ to be a smooth surjection where $U$ is a scheme and argue as above, noting that each $U^{p}$ is an algebraic space. \end{proof}

\begin{pg}[Proof of {\Cref{0001}}] \label{0009} Set $f := f_{X}$. The Leray spectral sequence associated to the map $f$ and sheaf $\G_{m}$ is of the form \begin{align} \label{sec-05-eqn-01} \mr{E}_{2}^{p,q} = \H^{p}_{\et}(S , \mb{R}^{q}f_{\ast}\G_{m}) \Rightarrow \H^{p+q}_{\et}(X , \G_{m}) \end{align} with differentials $\mr{d}_{2}^{p,q} : \mr{E}_{2}^{p,q} \to \mr{E}_{2}^{p+2,q-1}$. For any strictly henselian local ring $A$, we have $\H^{2}_{\et}(\P_{A}(\rho) , \G_{m}) = 0$ by \Cref{20190128-05}, hence $\mb{R}^{2}f_{\ast}\G_{m} = 0$ since its stalks vanish. The sheaf $\mb{R}^{1}f_{\ast}\G_{m}$ is the sheaf associated to $T \mapsto \Pic(X_{T})$; by \Cref{0002}, every line bundle on $\P_{T}(\rho)$ is, locally on $T$, isomorphic to one pulled back from $\P_{\Z}(\rho)$; hence $\mb{R}^{1}f_{\ast}\G_{m}$ is isomorphic to the constant sheaf $\underline{\Z}$. Hence we have an exact sequence \begin{align} \label{01-eqn-01} \H^{0}_{\et}(S , \underline{\Z}) \stackrel{\dagger}{\to} \H^{2}_{\et}(S , \G_{m}) \stackrel{f^{\ast}}{\to} \H^{2}_{\et}(X , \G_{m}) \to \H^{1}_{\et}(S , \underline{\Z}) \end{align} and we may argue as in \cite[5.4]{SHIN-TCBGOATGMG} to show that $f^{\ast}$ restricts to a surjection on the torsion subgroups, inducing an exact sequence \labelcref{0001-01} as desired. \qed \end{pg}

\begin{remark} From \Cref{0009}, we see that the map $\Gamma(S,\underline{\Z}) \to \Br'(S)$ in \labelcref{0001-01} corresponds to the differential $\mr{d}_{2}^{0,1}$ in the Leray spectral sequence. The Brauer class corresponding to the image of $1 \in \Gamma(S,\underline{\Z})$ may be described as follows. Set $R := \Z[t_{0},\dotsc,t_{n}]$ with $\deg(t_{i}) = \rho_{i}$ and let $\Aut_{\mr{gr.alg.}}(R)$ denote the group sheaf sending a scheme $T$ to the set of $\Z$-graded $\mc{O}_{T}$-algebra automorphisms of $R \otimes_{\Z} \mc{O}_{T}$. By \cite[\S8]{ALAMRANI-CDIEGDPDFPT}, we have an exact sequence \[ 1 \to \G_{m} \to \Aut_{\mr{gr.alg.}}(R) \to \Aut_{\mr{sch}}(\P_{\Z}(\rho)) \to 1 \] of sheaves of groups for the etale topology on the category of schemes, where the image of $\G_{m}$ is contained in the center of $\Aut_{\mr{gr.alg.}}(R)$. By definition, $X$ is an $\Aut_{\mr{sch}}(\P_{\Z}(\rho))$-torsor over $S$, and the class of $[X]$ under the coboundary map \[ \H_{\et}^{1}(S,\Aut_{\mr{sch}}(\P_{\Z}(\rho))) \to \H_{\et}^{2}(S,\G_{m}) \] is the desired Brauer class. \par Alternatively, fix an etale surjection $S' \to S$ and set $S'' := S' \times_{S} S'$ and $S''' := S' \times_{S} S' \times_{S} S'$; the choice of an isomorphism $X \times_{S} S' \simeq \P_{S'}(\rho)$ yields an automorphism $\varphi : \P_{S''}(\rho) \to \P_{S''}(\rho)$ satisfying the cocycle condition $p_{13}^{\ast}\varphi = p_{23}^{\ast}\varphi \circ p_{12}^{\ast}\varphi$ over $S'''$. Choose $\ell \gg 0$ so that $\mc{O}_{\P(\rho)}(\ell)$ is very ample; fixing a $\Z$-basis of $\Gamma(\P_{\Z}(\rho),\mc{O}_{\P_{\Z}(\rho)}(\ell))$ gives an invertible matrix $\varphi^{\sharp} \in \GL_{r}(\Gamma(S'',\mc{O}_{S''}))$, where $r = \rank_{\Z} \Gamma(\P_{\Z}(\rho),\mc{O}_{\P_{\Z}(\rho)}(\ell))$; here the invertible matrices $p_{13}^{\ast}\varphi^{\sharp},p_{12}^{\ast}\varphi^{\sharp} \cdot p_{23}^{\ast}\varphi^{\sharp} \in \GL_{r}(\Gamma(S''',\mc{O}_{S'''}))$ differ by a unit $u \in \Gamma(S''',\G_{m})$, which is the desired class in $\H_{\et}^{2}(S,\G_{m})$. In other words, given a $\Z$-graded algebra automorphism of $R$, it restricts to a $\Z$-graded algebra automorphism of its $\ell$th Veronese subring $R^{(\ell)} := \bigoplus_{i \ge 0} R_{i\ell}$, which restricts to an abelian group automorphism of $R_{\ell}$ and thus a $\Z$-graded algebra automorphism of the standard graded algebra $\Sym_{\Z}^{\bullet}R_{\ell} \simeq \Z[t'_{1},\dotsc,t'_{r}]$; the induced group homomorphism $\Aut_{\mr{gr.alg.}}(R) \to \Aut_{\mr{gr.alg.}}(\Sym_{\Z}^{\bullet}R_{\ell})$ induces a commutative diagram of exact sequences which we may use to compare the two constructions above.  \end{remark}

\begin{remark}[Comparison to the argument of Gabber] In \cite{GABBER-THESIS}, Gabber computes the Brauer group of Brauer-Severi schemes over an arbitrary base scheme by combining the following two facts to reduce to the $\P^{1}$ case: \begin{enumerate} \item Suppose $Y \to X$ is a closed immersion locally defined by a regular sequence, and let $B \to X$ be the blowup of $X$ at $Y$; then $\H_{\et}^{2}(X,\G_{m}) \to \H_{\et}^{2}(B,\G_{m})$ is injective. \item The blowup of $\P^{n}$ at a point is a $\P^{1}$-bundle over $\P^{n-1}$. \end{enumerate} In our case, we may ask whether the analogous statement to (2) holds -- namely, whether a (weighted) blowup of $\P(\rho)$ at a (torus-invariant) local complete intersection subscheme is isomorphic to a $\P(\rho')$-bundle over $\P(\rho'')$ for some $\rho',\rho''$ such that $|\rho|-1 = |\rho'|-1 + |\rho''|-1$. Indeed, the blowup of the weighted projective surface $\P(1,1,q_{2})$ at its unique singular point gives the $q_{2}$th Hirzebruch surface $\F_{q_{2}}$ (see \cite[1.2.3]{DOLGACHEV-WPV1982}, \cite{GAUDUCHON-HSAWPP2009}). Such a result for arbitrary $\rho$ would give an alternative proof of \Cref{0001}. This seems unlikely, however, as it (with \Cref{0023}) would imply that every weighted projective surface $\P(\rho_{0},\rho_{1},\rho_{2})$ is a $\P^{1}$-bundle over $\P^{1}$, which has Picard group $\Z^{2}$, but $\P(2,3,5)$ has three isolated singular points and blowing up these points increases the Picard rank by 3. \end{remark}

\section{Weighted projective stacks}

In this section we assume $n \ge 1$.

The weighted projective stack $\mc{P}_{\Z}(\rho)$ is smooth for any $\rho$ (hence $\mc{P}_{\Z}(\rho) \to \P_{\Z}(\rho)$ is not an isomorphism if $\rho \ne (1,\dotsc,1)$). A Deligne-Mumford stack $\mc{X}$ and its coarse moduli space $X$ may have different Brauer groups (and Picard groups) in general.

\begin{lemma} \label{0013} For any field $k$, the pullback map \[ \H_{\et}^{2}(\Spec k,\G_{m}) \to \H_{\et}^{2}(\mc{P}_{k}(\rho),\G_{m}) \] is an isomorphism. \end{lemma}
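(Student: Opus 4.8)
The plan is to compute $\H^2_{\et}(\mc{P}_k(\rho), \G_m)$ directly using the fact that $\mc{P}_k(\rho) = [(\A^{n+1}_k \setminus \{0\})/\G_m]$, and the key extra input (as hinted in the introduction) is that the $\G_m$-action on $\A^{n+1}$ extends to an action of the multiplicative monoid $\A^1$. Write $U := \A^{n+1}_k \setminus \{0\}$ and let $q : U \to \mc{P}_k(\rho)$ be the $\G_m$-torsor. First I would set up the descent/Leray spectral sequence for $q$, or equivalently use the Hochschild--Serre-type spectral sequence for the quotient stack, relating $\H^\bullet_{\et}(\mc{P}_k(\rho), \G_m)$ to the equivariant cohomology of $U$. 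The crucial computation is that $\Pic(U) = 0$ (since $U$ is an open subscheme of affine space whose complement has codimension $n+1 \ge 2$, so $\Pic(U) = \Pic(\A^{n+1}_k) = 0$) and $\Br'(U) = \Br'(\A^{n+1}_k) = \Br'(k)$ (for instance because $\A^1_k$ acting as a monoid gives a contraction of $U$ — more precisely, the monoid action lets one contract $\A^{n+1}$ onto the origin, and deleting a codimension-$\ge 2$ locus does not change the Brauer group of a regular scheme; alternatively cite homotopy invariance $\Br'(\A^N_k) = \Br'(k)$ together with a purity/codimension argument).

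Concretely, I would argue as follows. The quotient map factors the structure morphism $\mc{P}_k(\rho) \to \Spec k$, and the $\G_m$-gerbe-free nature of the situation means the relevant spectral sequence is $\mr{E}_2^{p,q} = \H^p_{\et}(\mr{B}\G_{m,k}, \mb{R}^q(\text{pr})_\ast \G_m)$ where $\text{pr}$ records the $U$-direction. The low-degree exact sequence reads
\begin{align*}
0 \to \H^1(\mr{B}\G_{m,k}, \G_m) \to \Pic(\mc{P}_k(\rho)) \to \Pic(U)^{\G_m} \to \H^2(\mr{B}\G_{m,k},\G_m) \to \ker\big(\H^2(\mc{P}_k(\rho),\G_m) \to \H^2(U,\G_m)\big) \to \dotsb
\end{align*}
Using $\Pic(U) = 0$, the $\G_m$-equivariant part also vanishes, so the kernel of restriction to $U$ is controlled entirely by $\H^\bullet(\mr{B}\G_{m,k}, \G_m)$; but $\H^2(\mr{B}\G_{m,k}, \G_m)$ injects into (indeed equals, in low degrees) cohomology of $\Spec k$ since $\mr{B}\G_m \to \Spec k$ is cohomologically affine with the relevant higher direct images vanishing — here I would invoke \Cref{20160903-37} applied to the trivial $\G_m$-gerbe $\mr{B}\G_{m,k} \to \Spec k$, which gives $\H^i_{\et}(\mr{B}\G_{m,k},\G_m)$ in terms of $\H^i_{\et}(\Spec k, \G_m)$ plus a correction from $\Pic(\mr{B}\G_m) = \Z$, and this correction is absorbed by the $\Pic(U)=0$ vanishing. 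Combined with $\Br'(U) = \Br'(k)$ and the fact that $U \to \Spec k$ has a section after a finite extension (so pullback $\Br'(k) \to \Br'(U)$ is injective), everything collapses to show $\H^2_{\et}(\Spec k, \G_m) \xrightarrow{\sim} \H^2_{\et}(\mc{P}_k(\rho), \G_m)$.

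The main obstacle I anticipate is cleanly establishing $\Br'(\A^{n+1}_k) = \Br'(k)$ — i.e.\ homotopy invariance of the cohomological Brauer group — over an \emph{arbitrary} field $k$, including imperfect fields of positive characteristic where the naive statement can fail for the $p$-primary part of $\H^2_{\et}(-,\G_m)$ if one is not careful (the $\mu_p$-versus-$\G_m$ subtlety flagged in the remark after \Cref{0011}). The safe route is probably to avoid invoking $\Br'(\A^{n+1}_k)$ wholesale and instead use the monoid structure more directly: the $\A^1$-action contracting $U$ (or rather, contracting $[\A^{n+1}/\G_m]$, which is $\mc{P}_k(\rho)$ together with the stacky point at the origin) gives a deformation retraction of $\mc{P}_k(\rho)$-type onto $\mr{B}\G_m$-type data, so one deduces the isomorphism by an argument parallel to \Cref{20190128-05}/\Cref{sec-02} rather than citing a black-box homotopy-invariance result. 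I would also double-check the base cases $n = 1$ (so $\mc{P}_k(q_0,q_1)$, a stacky $\P^1$) by hand, since the coarse space there is just $\P^1_k$ and one can compare directly with \Cref{0001}. Once the field case is settled, the statement follows; I expect the write-up to be short modulo the homotopy-invariance input.
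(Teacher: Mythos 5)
Your overall framework (present $\mc{P}_k(\rho)$ as $[U/\G_m]$ with $U = \A_k^{n+1}\setminus\{0\}$, kill the $q=1$ row of the descent spectral sequence since $\Pic(U)=0$, and reduce to the $\G_m$-invariant part of $\H_{\et}^{2}(U,\G_m)$) matches the paper's, and your instinct that the monoid action of $\A^1$ is the essential input is correct. But as written the proposal has a genuine gap at its central step: you compute the invariant Brauer classes by asserting $\Br'(U)=\Br'(\A_k^{n+1})=\Br'(k)$, i.e., homotopy invariance of the cohomological Brauer group of affine space. This is \emph{false} in positive characteristic: as the paper itself recalls in the remark at the end of \Cref{sec-02} (citing Auslander--Goldman), $\Br(k[x_1,x_2])$ has nontrivial $p$-torsion even for $k$ algebraically closed of characteristic $p$. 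You flag this as ``the main obstacle'' and gesture at a fix (``use the monoid structure more directly''), but the fix is never executed, and ``a deformation retraction onto $\mr{B}\G_m$-type data'' is not an argument. The paper's resolution avoids computing $\H_{\et}^{2}(\A_k^{n+1},\G_m)$ altogether: after using purity (Gabber, \v{C}esnavi\v{c}ius) to replace $U$ by $\A^{n+1}$ and injectivity of restriction to pass from $\G_m\times\A^{n+1}$ to $\A^1\times\A^{n+1}$, one observes that a class $\alpha$ with $a^{\ast}\alpha=p_2^{\ast}\alpha$ on $\A^1\times\A^{n+1}$ satisfies $\alpha=f^{\ast}p_2^{\ast}\alpha=f^{\ast}a^{\ast}\alpha$ for $f$ the inclusion $\{u=0\}$; since $a\circ f$ factors through $\Spec k$, such $\alpha$ is pulled back from $k$ no matter what $\H_{\et}^{2}(\A^{n+1},\G_m)$ is. That equalizer trick is the step you need to supply.

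A secondary problem: you invoke \Cref{20160903-37} for $\mr{B}\G_{m,k}\to\Spec k$ with $\G_m$-coefficients, but that lemma is stated, and true, only for quasi-coherent coefficients; with $\G_m$-coefficients the conclusion already fails in degree $1$ because $\Pic(\mr{B}\G_{m,k})=\Z$. The paper instead controls the $\mr{E}_2^{2,0}$ term by identifying the bottom row of the descent spectral sequence with the bar complex $\H_{\et}^{0}(\G_{m,k}^{\times\bullet},\G_m)$ and citing \cite[Lemma 4.2]{SHIN-TCBGOATGMG}; your ``correction from $\Pic(\mr{B}\G_m)=\Z$'' waves at exactly the terms that must be pinned down.
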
 \begin{proof} We have a descent spectral sequence \begin{align} \label{0013-eqn01} \mr{E}_{1}^{p,q} = \H^{q}_{\et}(\G_{m,k}^{\times p} \times_{k} (\A_{k}^{n+1} \setminus \{0\}) , \G_{m}) \implies \H^{p+q}_{\et}(\mc{P}_{k}(\rho),\G_{m}) \end{align} with differentials $\mr{d}_{1}^{p,q} : \mr{E}_{1}^{p,q} \to \mr{E}_{1}^{p+1,q}$. Each $\G_{m,k}^{\times p} \times_{k} (\A_{k}^{n+1} \setminus \{0\})$ is an open subscheme of $\A_{k}^{n+p+1}$, hence has trivial Picard group; hence $\mr{E}_{1}^{p,1} = 0$ for all $p$. The pullback $\mr{B}\G_{m,k} \to \mc{P}_{k}(\rho)$ induces an isomorphism of complexes $\H_{\et}^{0}(\G_{m,k}^{\times \bullet} , \G_{m}) \to \mr{E}_{1}^{\bullet,0}$; hence, by the proof of \cite[Lemma 4.2]{SHIN-TCBGOATGMG}, we have $\mr{E}_{2}^{2,0} = 0$. \par It remains to compute $\mr{E}_{2}^{0,2}$, which is isomorphic to the equalizer of the two pullback maps \[ a^{\ast},p_{2}^{\ast} : \H_{\et}^{2}(\A_{k}^{n+1} \setminus \{0\} , \G_{m}) \rightrightarrows \H_{\et}^{2}(\G_{m} \times_{k} (\A_{k}^{n+1} \setminus \{0\}),\G_{m}) \] corresponding to the action map and second projection, respectively; by purity for the Brauer group (see Gabber \cite{FUJIWARA-APOTAPC} and {\v C}esnavi{\v c}ius \cite{CESNAVICIUS-PFTBG}), this is isomorphic to the equalizer of $a^{\ast},p_{2}^{\ast} : \H_{\et}^{2}(\A_{k}^{n+1} , \G_{m}) \rightrightarrows \H_{\et}^{2}(\G_{m} \times_{k} \A_{k}^{n+1},\G_{m})$, and also to the equalizer of $a^{\ast},p_{2}^{\ast} : \H_{\et}^{2}(\A_{k}^{n+1} , \G_{m}) \rightrightarrows \H_{\et}^{2}(\A_{k}^{1} \times_{k} \A_{k}^{n+1},\G_{m})$ since the restriction $\H_{\et}^{2}(\A_{k}^{1} \times_{k} \A_{k}^{n+1},\G_{m}) \to \H_{\et}^{2}(\G_{m} \times_{k} \A_{k}^{n+1},\G_{m})$ is injective. With coordinates $\A_{k}^{1} = \Spec k[u]$, let $f : \A_{k}^{1} \times_{k} \A_{k}^{n+1} \to \A_{k}^{n+1}$ be the morphism of $k$-schemes obtained by setting $u = 0$; note that $p_{2}f = \id$ and $af$ factors through $\Spec k$. Let $\alpha \in \H_{\et}^{2}(\A_{k}^{n+1},\G_{m})$ be a Brauer class such that $a^{\ast}\alpha = p_{2}^{\ast}\alpha$ in $\H_{\et}^{2}(\A_{k}^{1} \times_{k} \A_{k}^{n+1},\G_{m})$; then $f^{\ast}a^{\ast}\alpha = f^{\ast}p_{2}^{\ast}\alpha = \alpha$; hence $\alpha$ is in the image of $\H_{\et}^{2}(\Spec k , \G_{m})$. \end{proof}

\begin{lemma} \label{0018} \cite[4.3]{NOOHI-PSOAWPS} For any connected scheme $S$, the map \[ \Z \oplus \Pic(S) \to \Pic(\mc{P}_{S}(\rho)) \] sending \[ (\ell,\mc{L}) \mapsto \mc{O}_{\mc{P}_{S}(\rho)}(\ell) \otimes \pi_{S}^{\ast}\mc{L} \] is an isomorphism. \end{lemma}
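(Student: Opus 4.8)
The plan is to compute $\Pic(\mc{P}_{S}(\rho)) = \H^{1}_{\et}(\mc{P}_{S}(\rho),\G_{m})$ by descent along the $\G_{m}$-torsor presentation $q : U \to \mc{P}_{S}(\rho)$, where $U := \A_{S}^{n+1} \setminus \{0\}$, in exact parallel with the proof of \Cref{0013}. Writing $U_{p} := \G_{m,S}^{\times p} \times_{S} U$ for the terms of the associated {\v C}ech nerve, one has a descent spectral sequence
\[ \mr{E}_{1}^{p,q} = \H^{q}_{\et}(U_{p},\G_{m}) \implies \H^{p+q}_{\et}(\mc{P}_{S}(\rho),\G_{m}) \]
with differentials $\mr{d}_{r}^{p,q} : \mr{E}_{r}^{p,q} \to \mr{E}_{r}^{p+r,q-r+1}$; the abutment in total degree $1$ receives contributions only from $\mr{E}_{\infty}^{0,1}$ and $\mr{E}_{\infty}^{1,0}$, so it suffices to compute these and to split the resulting extension.

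First I would treat the row $q = 1$. Each $U_{p}$ is an open subscheme of $\A_{S}^{n+p+1}$, so, since $n \ge 1$, every line bundle on $U_{p}$ is pulled back from $S$ (over a field this is because affine space has trivial divisor class group; over a general base one reduces to this case), whence $\mr{E}_{1}^{p,1} \simeq \Pic(S)$ for all $p \ge 0$. Since the $\G_{m}$-action on $U$ is over $S$, all the face maps $U_{p} \to U_{p-1}$ induce the identity on $\Pic(S)$ under this identification, so the complex $\mr{E}_{1}^{\bullet,1}$ is $\Pic(S) \xrightarrow{0} \Pic(S) \xrightarrow{\id} \Pic(S) \xrightarrow{0} \dotsb$; thus $\mr{E}_{2}^{0,1} \simeq \Pic(S)$ and $\mr{E}_{2}^{p,1} = 0$ for $p \ge 1$. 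For the row $q = 0$: since $n+1 \ge 2$, a depth argument gives $\Gamma(U,\mc{O}_{U}) = \Gamma(\A_{S}^{n+1},\mc{O})$, and the complex $\mr{E}_{1}^{\bullet,0}$ is then identified with the one computing the cohomology of the {\v C}ech nerve of $\Spec S \to \mr{B}\G_{m,S}$ with $\G_{m}$-coefficients; exactly as in the proof of \Cref{0013} (via the proof of \cite[Lemma 4.2]{SHIN-TCBGOATGMG}) this yields $\mr{E}_{2}^{0,0} = \Gamma(S,\G_{m})$, $\mr{E}_{2}^{1,0} \simeq \Z$ coming from the character group of $\G_{m}$, and $\mr{E}_{2}^{2,0} = 0$.

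Since $\mr{E}_{r}^{p,q} = 0$ for $p < 0$ and $\mr{E}_{2}^{2,0} = 0$, both $\mr{E}_{2}^{0,1}$ and $\mr{E}_{2}^{1,0}$ survive to $\mr{E}_{\infty}$, so $\Pic(\mc{P}_{S}(\rho))$ sits in an extension $0 \to \Z \to \Pic(\mc{P}_{S}(\rho)) \to \Pic(S) \to 0$ in which the quotient map is pullback along $q$ and the subgroup $\Z = \mr{E}_{\infty}^{1,0}$ consists of the classes $\mc{O}_{\mc{P}_{S}(\rho)}(\ell)$, with $\mc{O}_{\mc{P}_{S}(\rho)}(1)$ a generator. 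The map $\pi_{S}^{\ast}$ splits this extension, since $q^{\ast}\pi_{S}^{\ast}\mc{L}$ is the pullback of $\mc{L}$ from $S$; hence $(\ell,\mc{L}) \mapsto \mc{O}_{\mc{P}_{S}(\rho)}(\ell) \otimes \pi_{S}^{\ast}\mc{L}$ is an isomorphism. The step I expect to be the main obstacle is the Picard-group and unit computations for the terms $U_{p} = \G_{m,S}^{\times p} \times_{S} U$ when $S$ may be non-reduced or non-normal: there the naive identifications $\Pic(U_{p}) \simeq \Pic(S)$ and $\Gamma(U_{p},\G_{m}) \simeq \Gamma(S,\G_{m}) \times \Z^{p}$ can fail, and one should either impose a normality (or at least locally Noetherian) hypothesis and reduce the general case to it by a standard limit argument, or simply invoke \cite[4.3]{NOOHI-PSOAWPS}.
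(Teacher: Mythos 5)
The paper gives no proof of this lemma beyond the citation to \cite[4.3]{NOOHI-PSOAWPS}, so your argument must stand on its own. It does stand, and is a clean extension of the proof of \Cref{0013}, whenever $S$ is a field or more generally a reduced seminormal (e.g.\ normal) scheme. The genuine gap is exactly the one you flag in your last sentence, and neither of your proposed remedies closes it. The lemma is asserted for an \emph{arbitrary} connected scheme $S$, and for such $S$ the inputs to your $\mr{E}_{1}$-page fail: if $S = \Spec A$ with $A$ not seminormal then $\Pic(A[t]) \ne \Pic(A)$ (Traverso), so $\mr{E}_{1}^{p,1} \not\simeq \Pic(S)$; and if $A$ is non-reduced then $\Gamma(\A_{A}^{1},\G_{m}) = A[t]^{\times} \supsetneq A^{\times}$, so the row $q=0$ is no longer the complex computing the cohomology of $\mr{B}\G_{m,S}$ and the identifications $\mr{E}_{2}^{1,0} \simeq \Z$, $\mr{E}_{2}^{2,0} = 0$ break down. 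A limit argument only reduces to the Noetherian (finite type over $\Z$) case, which is still allowed to be non-reduced and non-seminormal, so it does not reduce you to the case where your computations hold; and invoking \cite[4.3]{NOOHI-PSOAWPS} is circular, since that is the statement being proved. The general case genuinely matters here: in \Cref{0020} this lemma is used to identify $\mb{R}^{1}f_{\ast}\G_{m}$ with $\underline{\Z}$, which requires the statement over strictly henselian local rings that need not be reduced.

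The standard way to close the gap is to change strategy for the relative statement: use your descent spectral sequence only over fields $k$, where it correctly gives $\Pic(\mc{P}_{k}(\rho)) \simeq \Z$ generated by $\mc{O}(1)$; then, for a general connected $S$, show that any line bundle $\mc{L}$ on $\mc{P}_{S}(\rho)$ restricts on each fiber to some $\mc{O}(\ell)$ with $\ell$ locally constant (hence constant), and that $\pi_{S\ast}(\mc{L} \otimes \mc{O}(-\ell))$ is a line bundle on $S$ whose pullback recovers $\mc{L} \otimes \mc{O}(-\ell)$. This uses cohomology and base change via \Cref{0015}(1),(3),(4) (in particular $\H^{0}(\mc{P}_{A}(\rho),\mc{O}) = A$ and $\H^{1}(\mc{P}_{A}(\rho),\mc{O}) = 0$) and works over an arbitrary base; it is the same mechanism that makes the analogous statements for $\P_{S}^{n}$ and for $\P_{S}(\rho)$ in \Cref{0002} hold without reducedness or normality hypotheses.
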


\begin{lemma}[Cohomology of $\mc{O}_{\mc{P}(\rho)}(\ell)$] \label{0015} \cite[2.5]{MEIER-VBOTMSOEC} Let $A$ be a ring and set $X := \mc{P}_{A}(\rho)$. \begin{enumerate} \item For $\ell \ge 0$, the $A$-module $\H^{0}(X,\mc{O}_{X}(\ell))$ is free with basis consisting of monomials $t_{0}^{e_{0}} \dotsb t_{n}^{e_{n}}$ such that $e_{0},\dotsc,e_{n} \in \Z_{\ge 0}$ and $\rho_{0}e_{0} + \dotsb + \rho_{n}e_{n} = \ell$. \item For $\ell < 0$, the $A$-module $\H^{n}(X,\mc{O}_{X}(\ell))$ is free with basis consisting of monomials $t_{0}^{e_{0}} \dotsb t_{n}^{e_{n}}$ such that $e_{0},\dotsc,e_{n} \in \Z_{< 0}$ and $\rho_{0}e_{0} + \dotsb + \rho_{n}e_{n} = \ell$. \item If $(i,\ell) \not\in (\{0\} \times \Z_{\ge 0}) \cup (\{n\} \times \Z_{< 0})$, then $\H^{i}(X,\mc{O}_{X}(\ell)) = 0$. \item For any $A$-module $M$ and any $(i,\ell)$, the canonical map \[ \H^{i}(X,\mc{O}_{X}(\ell)) \otimes_{A} M \to \H^{i}(X,\mc{O}_{X}(\ell) \otimes_{A} M) \] is an isomorphism. \end{enumerate} \end{lemma}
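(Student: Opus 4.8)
The plan is to compute $\H^\bullet(X,\mc{O}_X(\ell))$ by reducing it to the $\Z$-graded local cohomology of the polynomial ring $R := A[t_0,\dotsc,t_n]$ (with $\deg t_i = \rho_i$), in close analogy with the computation of the cohomology of $\mc{O}(\ell)$ on $\Proj$ that lies behind \Cref{0003}; the one extra ingredient is that $\G_m$ is linearly reductive, which lets us replace the scheme $\A_A^{n+1} \setminus \{0\}$ by the quotient stack $\mc{P}_A(\rho)$ without creating higher cohomology.

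First I would record the stacky setup. Set $\mf{m} := (t_0,\dotsc,t_n) \subset R$ and $\ms{A} := [\A_A^{n+1}/\G_m] = [\Spec R/\G_m]$, so that $X = \mc{P}_A(\rho)$ is the open substack of $\ms{A}$ complementary to the closed substack $[\Spec(R/\mf{m})/\G_m] = \mr{B}\G_{m,A}$. The category $\mr{QCoh}(\ms{A})$ is equivalent to the category of $\Z$-graded $R$-modules, under which $\mc{O}_{\ms{A}}(\ell)$ corresponds to the grading shift $R(\ell)$, the functor $\Gamma(\ms{A},-)$ corresponds to $M_\bullet \mapsto M_0$, and taking sections supported on $\mr{B}\G_{m,A}$ corresponds to taking the $\mf{m}$-power-torsion submodule. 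Since $\G_m$ is linearly reductive the functor $M_\bullet \mapsto M_0$ is exact, so $\ms{A}$ is cohomologically affine over $A$ and $\H^i(\ms{A},F) = 0$ for $i > 0$ and every quasi-coherent $F$ (as in the proof of \Cref{20160903-37}, via \cite[Remark 3.5]{ALPER-GMSFAS}). Consequently the long exact sequence of cohomology with supports along $\mr{B}\G_{m,A}$, applied to $\mc{O}_{\ms{A}}(\ell)$, collapses to an exact sequence $0 \to \H_{\mf{m}}^0(R)_\ell \to R_\ell \to \H^0(X,\mc{O}_X(\ell)) \to \H_{\mf{m}}^1(R)_\ell \to 0$ together with isomorphisms $\H^i(X,\mc{O}_X(\ell)) \cong \H_{\mf{m}}^{i+1}(R)_\ell$ for all $i \ge 1$, where $\H_{\mf{m}}^\bullet(R)$ denotes the $\Z$-graded local cohomology of $R$ at $\mf{m}$.

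Next I would compute $\H_{\mf{m}}^\bullet(R)$ by the \v{C}ech (stable Koszul) complex $\check{\mr{C}}^\bullet$ on $t_0,\dotsc,t_n$. Over $\Z$ this is a bounded complex of free $\Z$-modules, and writing it as $\bigotimes_{i=0}^{n}\bigl(\Z[t_0,\dotsc,t_n] \to \Z[t_0,\dotsc,t_n]_{t_i}\bigr)$ shows that its cohomology is concentrated in degree $n+1$, where it is the free $\Z$-module on the ``inverse monomials'' $t_0^{e_0}\dotsb t_n^{e_n}$ with $e_0,\dotsc,e_n \in \Z_{<0}$, graded by $\sum_i \rho_i e_i$. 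Since $\check{\mr{C}}^\bullet$ base-changes along $\Z \to A$ to the \v{C}ech complex computing $\H_{\mf{m}}^\bullet(R)$, and its cohomology over $\Z$ is $\Z$-free (so the complex is, in the derived category, that cohomology placed in a single degree, and the $\mr{Tor}$ terms in the universal-coefficient sequence vanish after any base change), we obtain $\H_{\mf{m}}^j(R) = 0$ for $j \ne n+1$ and $\H_{\mf{m}}^{n+1}(R) = \H_{\mf{m}}^{n+1}(\Z[t_0,\dotsc,t_n]) \otimes_\Z A$, free over $A$ on the inverse monomials. Since $n \ge 1$, we have $\H_{\mf{m}}^0(R) = \H_{\mf{m}}^1(R) = 0$, so the sequence above gives $\H^0(X,\mc{O}_X(\ell)) = R_\ell$, free over $A$ on the degree-$\ell$ monomials in the $t_i$ (hence $0$ when $\ell < 0$); for $1 \le i \le n-1$ it gives $\H^i(X,\mc{O}_X(\ell)) = \H_{\mf{m}}^{i+1}(R)_\ell = 0$; for $i = n$ it gives $\H^n(X,\mc{O}_X(\ell)) = \H_{\mf{m}}^{n+1}(R)_\ell$, free over $A$ on the inverse monomials of degree $\ell$ (hence $0$ when $\ell \ge 0$, as such a monomial has degree $\le -(n+1)$); and $\H^i(X,\mc{O}_X(\ell)) = 0$ for $i > n$. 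This proves parts (1), (2), (3). For part (4), $\mc{O}_X(\ell) \otimes_A M$ corresponds to the graded module $R(\ell) \otimes_A M = \Z[t_0,\dotsc,t_n](\ell) \otimes_\Z M$, and running the same argument for $R \otimes_A M$ — again using $\Z$-freeness of $\H_{\mf{m}}^\bullet(\Z[t_0,\dotsc,t_n])$ to kill the $\mr{Tor}$ terms, together with the fact that passing to a graded piece commutes with $-\otimes_\Z M$ — identifies the canonical base-change map with an isomorphism.

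The step I expect to require the most care is the first: checking that passing to the $\G_m$-quotient introduces no higher cohomology, so that the cohomology-with-supports sequence on $\ms{A}$ genuinely computes the graded pieces of $\H_{\mf{m}}^\bullet(R)$. This is exactly where linear reductivity of $\G_m$ (equivalently, cohomological affineness of $\ms{A} \to \Spec A$) enters; granting it, the remainder is the standard local-cohomology computation for a polynomial ring over $\Z$ together with routine base-change bookkeeping, entirely parallel to the scheme case \Cref{0003}.
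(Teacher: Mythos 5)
The paper offers no proof of this lemma at all --- it simply cites \cite[2.5]{MEIER-VBOTMSOEC} --- so the comparison is really with the standard argument, and your proposal is a correct and complete instance of it. The two key points are exactly the ones you isolate: (i) since $\G_{m}$ is linearly reductive, the quotient presentation costs nothing cohomologically --- $[\Spec R/\G_{m}]$ and each $[\Spec R[t_{i}^{-1}]/\G_{m}]$ are cohomologically affine with global sections the degree-zero part of the corresponding graded module --- and (ii) the rest is the graded local cohomology of a polynomial ring, read off degree by degree, exactly parallel to \Cref{0003}. One stylistic remark: routing the computation through cohomology with supports on $\ms{A}$ obliges you to justify that $\mr{R}\Gamma_{\mr{B}\G_{m}}$ on $\mr{QCoh}(\ms{A})$ really computes the degree-$\ell$ pieces of $\H_{\mf{m}}^{\bullet}(R)$ (true, but one extra identification of derived functors across the graded-module equivalence); you could avoid this entirely by computing $\H^{\bullet}(X,\mc{O}_{X}(\ell))$ from the \v{C}ech complex of the cover of $X$ by the $n+1$ cohomologically affine opens $[\Spec R[t_{i}^{-1}]/\G_{m}]$, whose degree-$\ell$ part is the truncated stable Koszul complex; this is the route the cited reference takes and it lands in the same place. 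Your base-change bookkeeping through $\Z$ (freeness of the cohomology of the stable Koszul complex over $\Z$, which kills the Tor terms) correctly delivers part (4), and you correctly invoke $n \ge 1$ (in force throughout this section of the paper) to discard $\H_{\mf{m}}^{0}$ and $\H_{\mf{m}}^{1}$.
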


\begin{lemma} \label{0019} Let $A$ be a strictly henselian local ring. Then $\H_{\et}^{2}(\mc{P}_{A}(\rho),\G_{m}) = 0$. \end{lemma}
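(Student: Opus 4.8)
The plan is to imitate the argument of \Cref{20190128-05}, replacing the scheme $\P_A(\rho)$ by the stack $\mc{P}_A(\rho)$ throughout. By standard limit techniques we reduce to the case where $A$ is excellent, and we first treat the case where $A$ is complete. Write $X := \mc{P}_A(\rho)$ and let $\pi : \mc{G} \to X$ be a $\G_m$-gerbe representing a class $[\mc{G}] \in \H^2_{\et}(X,\G_m)$; we must show $\pi$ admits a section. The special fiber $\mc{G}_0$ is a $\G_m$-gerbe over $X_0 = \mc{P}_k(\rho)$, and by \Cref{0013} the pullback $\H^2_{\et}(\Spec k,\G_m) \to \H^2_{\et}(\mc{P}_k(\rho),\G_m)$ is an isomorphism, so (as $k$ is separably closed) $[\mc{G}_0] = 0$ and $\mc{G}_0$ is trivial.

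Next I would run the same Tannaka-duality-plus-Grothendieck-existence chain of equivalences as in \Cref{20190128-05}, with $X_\ell := X \times_{\Spec A} \Spec A/\mf{m}^{\ell+1}$ and $\mc{G}_\ell := \mc{G} \times_X X_\ell$, to reduce the existence of a section $X \to \mc{G}$ to the existence of a compatible system of sections $X_\ell \to \mc{G}$, i.e.\ of $1$-twisted line bundles on the $\mc{G}_\ell$. One needs here that \cite[Theorem 1.1]{HALLRYDH-CTDAAOHS2014} applies with $X$ an algebraic stack rather than a scheme; this is fine since $\mc{P}_A(\rho)$ is a nice (smooth, with finite inertia, hence in particular with quasi-finite and separated diagonal) algebraic stack of finite type over the excellent ring $A$, which is within the scope of the cited Tannaka duality theorem. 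Having trivialized $\mc{G}_0$, the obstruction to lifting a $1$-twisted line bundle along $\mc{G}_\ell \to \mc{G}_{\ell+1}$ lies in $\H^2_{\et}(\mc{G}_\ell,\mf{m}^\ell \mc{O}_{\mc{G}_\ell})$; by \Cref{20160903-37} (applied to the $\G_m$-gerbe $\mc{G}_\ell \to X_\ell$, $\G_m$ being diagonalizable) this group is isomorphic to $\H^2_{\et}(X_\ell,\mf{m}^\ell \mc{O}_{X_\ell})$, and the latter vanishes by \Cref{0015}, so every obstruction dies and the compatible system of lifts exists. This proves the result for $A$ complete; for general excellent $A$, one passes to the completion $\mc{G}^\wedge$ and descends the resulting $1$-twisted line bundle back to $\mc{G}$ by Artin approximation, exactly as in \Cref{20190128-05}.

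The only genuinely new input compared to \Cref{20190128-05} is the base-field case, which here is \Cref{0013} rather than \Cref{0006}/\Cref{sec-02}; everything else is formally identical once one checks that the cited theorems (Tannaka duality of Hall–Rydh, Grothendieck existence, \Cref{20160903-37}) are stated with enough generality to accommodate a smooth algebraic stack $X$ with finite inertia in place of a scheme. I expect the main point to verify carefully is precisely this: that $\mc{P}_A(\rho)$ satisfies the hypotheses of \cite[Theorem 1.1]{HALLRYDH-CTDAAOHS2014} (noetherian, excellent base; $X$ of finite type with affine stabilizers / quasi-compact separated diagonal) and that the coherence and base-change statements of \Cref{0015} suffice to kill the deformation obstructions — but both are essentially routine given that $\mc{P}_A(\rho) = [(\A^{n+1}_A \setminus \{0\})/\G_m]$ is visibly a well-behaved quotient stack.
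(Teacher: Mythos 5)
Your proposal is correct and follows essentially the same route as the paper: the paper's proof of \Cref{0019} is precisely ``run the argument of \Cref{20190128-05}, using \Cref{0013} for triviality of $\mc{G}_0$ over the special fiber, Grothendieck existence for proper stacks in place of the scheme version, and \Cref{0015} to kill the deformation obstructions.'' The one point you flag as ``routine to check''---that the Grothendieck existence step carries over to the stack---is exactly where the paper supplies a specific citation (Olsson's Grothendieck existence for Artin stacks, together with properness of $\mc{P}(\rho)$), so your instinct about where the care is needed is right.
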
 \begin{proof} The proof is the same as that of \Cref{20190128-05} with the following modifications: the gerbe $\mc{G}_{0}$ over the special fiber follows from \Cref{0013}; to obtain the equivalence marked 2, we use Grothendieck existence for stacks \cite[1.4]{OLSSON-ONPROPERCOVERINGSOFARTINSTACKS} (using that $\mc{P}(\rho)$ is proper \cite[2.1]{MEIER-VBOTMSOEC}); to conclude that $\H_{\et}^{2}(X_{\ell},\mf{m}^{\ell}\mc{O}_{X_{\ell}}) = 0$, we use \Cref{0015}. \end{proof}

\begin{lemma} \label{0025} Let \[ \pi_{\rho} : \mc{P}_{\Z}(\rho) \to \P_{\Z}(\rho) \] denote the coarse moduli space morphism. For any $\ell \in \Z$, there is a canonical $\mc{O}_{\mc{P}_{\Z}(\rho)}$-linear map \begin{align} \label{0016-eqn01} \pi_{\rho}^{\ast}(\mc{O}_{\P_{\Z}(\rho)}(\ell)) \to \mc{O}_{\mc{P}_{\Z}(\rho)}(\ell) \end{align} which is an isomorphism if $\ell$ is divisible by $\lcm(\rho)$. \end{lemma}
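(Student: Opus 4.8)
The plan is to construct the map \labelcref{0016-eqn01} locally on the base charts of $\P_\Z(\rho)$ and then observe that these local maps glue. Recall that for a homogeneous element $g \in R = \Z[t_0,\dotsc,t_n]$ of positive degree, the standard chart $D_+(g) \subseteq \P_\Z(\rho)$ is $\Spec R_{(g)}$, where $R_{(g)}$ is the degree-zero part of the localization $R_g$. The preimage in $\mc{P}_\Z(\rho)$ is the quotient stack $[\Spec R_g / \G_m]$, whose quasi-coherent sheaves are $\Z$-graded $R_g$-modules; under this identification, $\mc{O}_{\mc{P}_\Z(\rho)}(\ell)|_{D_+(g)}$ corresponds to $R_g$ with its grading shifted by $\ell$, while $\pi_\rho^\ast(\mc{O}_{\P_\Z(\rho)}(\ell))|_{D_+(g)}$ corresponds to the $R_{(g)}$-module $(R_g)_{\ell} \otimes_{R_{(g)}} R_g$ placed in degree $\dots$ — more precisely, to the $\Z$-graded $R_g$-module freely generated in degree $0$ by the $R_{(g)}$-module $M_\ell := (R_g)_\ell$ of degree-$\ell$ elements (recall $\mc{O}_{\P_\Z(\rho)}(\ell)$ is, by definition, the sheaf associated on $D_+(g)$ to $M_\ell$, so its pullback is $M_\ell \otimes_{R_{(g)}} R_g$ with the grading of the right factor). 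The multiplication map $M_\ell \otimes_{R_{(g)}} R_g \to R_g$, $m \otimes r \mapsto mr$, is $R_g$-linear and, crucially, respects the $\Z$-grading when $M_\ell$ is placed in degree $\ell$; this is the desired map on $D_+(g)$, hence after sheafifying gives \labelcref{0016-eqn01}.

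Next I would check \textbf{compatibility on overlaps}, so that the local maps glue to a global map of $\mc{O}_{\mc{P}_\Z(\rho)}$-modules. On $D_+(g) \cap D_+(h) = D_+(gh)$, both descriptions are obtained by further localizing at $gh$, and the multiplication map is manifestly compatible with localization; the canonical isomorphisms patching the charts of $\P_\Z(\rho)$ (resp.\ $\mc{P}_\Z(\rho)$) are exactly the identity maps on the common localization, so no cocycle obstruction arises. This gives a well-defined canonical $\mc{O}_{\mc{P}_\Z(\rho)}$-linear map \labelcref{0016-eqn01} for every $\ell \in \Z$.

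Finally I would verify the \textbf{isomorphism claim} when $\lcm(\rho) \mid \ell$. On the chart $D_+(t_i)$, the degree-$\ell$ part of $R_{t_i}$ is $M_\ell = t_i^{\ell/\rho_i} \cdot R_{(t_i)}$, a free rank-one $R_{(t_i)}$-module, precisely because $\rho_i \mid \ell$ (which holds since $\rho_i \mid \lcm(\rho) \mid \ell$); here one uses \Cref{0012}, which says $\mc{O}_{\P_\Z(\rho)}(\ell)$ is invertible exactly when $\lcm(\rho) \mid \ell$, so that locally on an affine cover refining the $D_+(t_i)$ the pullback $\pi_\rho^\ast \mc{O}_{\P_\Z(\rho)}(\ell)$ is free of rank one. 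Under the multiplication map, a generator $t_i^{\ell/\rho_i} \otimes 1$ maps to the unit $t_i^{\ell/\rho_i} \in R_{t_i}$ in degree $\ell$, so the map is an isomorphism on each such chart, hence globally. The only mild subtlety — and the step I expect to take the most care — is bookkeeping the two $\Z$-gradings (the one intrinsic to $\pi_\rho^\ast(-)$ coming from $\mc{P}_\Z(\rho)$ versus the shift by $\ell$) so that multiplication is seen to be degree-preserving; once the conventions are pinned down, everything else is formal. One can also phrase the whole argument uniformly via the adjunction unit for $\pi_\rho^\ast \dashv \pi_{\rho\ast}$ together with the identity $\pi_{\rho\ast}\mc{O}_{\mc{P}_\Z(\rho)}(\ell) = \mc{O}_{\P_\Z(\rho)}(\ell)$ valid for $\lcm(\rho)\mid\ell$ (itself a consequence of \Cref{0015}(1) compared with \Cref{0003}(1)), which exhibits \labelcref{0016-eqn01} as a counit-type map and makes the isomorphism statement transparent.
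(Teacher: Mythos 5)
Your proposal is correct and follows essentially the same route as the paper: restrict to the standard charts $D_+(t_i)$, identify the map with multiplication $(R_{t_i})_\ell \otimes_{R_{(t_i)}} (R_{t_i})_m \to (R_{t_i})_{\ell+m}$ on graded pieces, and observe that when $\rho_i \mid \ell$ the degree-$\ell$ part $(R_{t_i})_\ell = t_i^{\ell/\rho_i}R_{(t_i)}$ is free of rank one with generator mapping to a unit, which is exactly the paper's multiplication-by-$t_i^{\ell/\rho_i}$ argument. The appeal to \Cref{0012} is superfluous (and its ``only if'' direction assumes condition (N)); your direct computation of $(R_{t_i})_\ell$ already does the work.
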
 \begin{proof} Set $R := \Z[t_{0},\dotsc,t_{n}]$ with the $\Z$-grading determined by $\deg(t_{i}) = \rho_{i}$. The restriction of \labelcref{0016-eqn01} to the open substack $[(\Spec R[t_{i}^{-1}])/\G_{m}]$ corresponds to the graded homomorphism \begin{align} \label{0016-eqn02} R(\ell)[t_{i}^{-1}]_{0} \otimes_{R[t_{i}^{-1}]_{0}} R[t_{i}^{-1}] \to R(\ell)[t_{i}^{-1}] \end{align} of $\Z$-graded $R[t_{i}^{-1}]$-modules; the $m$th component of \labelcref{0016-eqn02} is isomorphic to the $R[t_{i}^{-1}]_{0}$-linear map \begin{align} \label{0016-eqn03} R[t_{i}^{-1}]_{\ell} \otimes_{R[t_{i}^{-1}]_{0}} R[t_{i}^{-1}]_{m} \to R[t_{i}^{-1}]_{\ell+m} \end{align} induced by multiplication. If $\ell$ is divisible by $\rho_{i}$, then the multiplication-by-$t_{i}^{\ell/\rho_{i}}$ map $R[t_{i}^{-1}] \to R[t_{i}^{-1}](\ell)$ is an isomorphism of $\Z$-graded $R[t_{i}^{-1}]$-modules, thus \labelcref{0016-eqn03} is an isomorphism for all $m \in \Z$, in other words the restriction of \labelcref{0016-eqn01} to $[(\Spec R[t_{i}^{-1}])/\G_{m}]$ is an isomorphism.  \end{proof}

\begin{lemma} \label{0016} The pullback $\pi_{\rho}^{\ast} : \Pic(\P_{\Z}(\rho)) \to \Pic(\mc{P}_{\Z}(\rho))$ is multiplication by $\lcm(\rho)$. \end{lemma}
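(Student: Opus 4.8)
The plan is to assemble the three structural results already in hand: the computation of $\Pic(\P_{\Z}(\rho))$ from \Cref{0002}, the computation of $\Pic(\mc{P}_{\Z}(\rho))$ from \Cref{0018}, and the comparison map between twisting sheaves from \Cref{0025}. Since everything takes place over the connected Noetherian base $\Spec\Z$, which has trivial Picard group, these Picard groups are both infinite cyclic and one just has to track generators.

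Concretely, I would first record that \Cref{0002} (applied with $S = \Spec\Z$, using $\Pic(\Spec\Z) = 0$) identifies $\Pic(\P_{\Z}(\rho)) \simeq \Z$ with generator the class of $\mc{O}_{\P_{\Z}(\rho)}(\lcm(\rho))$, while \Cref{0018} identifies $\Pic(\mc{P}_{\Z}(\rho)) \simeq \Z$ with generator the class of $\mc{O}_{\mc{P}_{\Z}(\rho)}(1)$. Because $\mc{P}_{\Z}(\rho) = [(\A_{\Z}^{n+1}\setminus\{0\})/\G_{m}]$ and $\mc{O}_{\mc{P}_{\Z}(\rho)}(\ell)$ is the line bundle attached to the character $u \mapsto u^{\ell}$ of $\G_{m}$, we have $\mc{O}_{\mc{P}_{\Z}(\rho)}(\ell) \simeq \mc{O}_{\mc{P}_{\Z}(\rho)}(1)^{\otimes \ell}$; hence under the identification of \Cref{0018} the class of $\mc{O}_{\mc{P}_{\Z}(\rho)}(\ell)$ is the integer $\ell$.

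Next I would invoke \Cref{0025} with $\ell = \lcm(\rho)$: since $\lcm(\rho)$ is (trivially) divisible by $\lcm(\rho)$, the canonical map \labelcref{0016-eqn01} is an isomorphism, so $\pi_{\rho}^{\ast}(\mc{O}_{\P_{\Z}(\rho)}(\lcm(\rho))) \simeq \mc{O}_{\mc{P}_{\Z}(\rho)}(\lcm(\rho))$. Unwinding the two identifications, $\pi_{\rho}^{\ast}$ sends the generator $1 \in \Z \simeq \Pic(\P_{\Z}(\rho))$ to the element $\lcm(\rho) \in \Z \simeq \Pic(\mc{P}_{\Z}(\rho))$, which is precisely the statement that $\pi_{\rho}^{\ast}$ is multiplication by $\lcm(\rho)$.

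I do not expect a genuine obstacle here; the proof is a short bookkeeping argument. The only points requiring a moment of care are checking that the normalization $\mc{O}_{\mc{P}(\rho)}(1)^{\otimes \ell} \simeq \mc{O}_{\mc{P}(\rho)}(\ell)$ matches the generator chosen in \Cref{0018}, and that the hypotheses of \Cref{0002} and \Cref{0018} (connected, locally Noetherian) are satisfied by $\Spec\Z$; both are immediate. If one preferred to avoid \Cref{0002} and \Cref{0018} at $\Spec\Z$ directly, one could equivalently argue over an arbitrary connected Noetherian $S$ and check compatibility of $\pi_{\rho}^{\ast}$ with the two splittings, but for the stated lemma this is unnecessary.
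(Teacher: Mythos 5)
Your proposal is correct and follows the paper's proof essentially verbatim: both identify $\Pic(\P_{\Z}(\rho)) \simeq \Z$ via \Cref{0002} with generator $\mc{O}_{\P_{\Z}(\rho)}(\lcm(\rho))$, identify $\Pic(\mc{P}_{\Z}(\rho)) \simeq \Z$ via \Cref{0018} with generator $\mc{O}_{\mc{P}_{\Z}(\rho)}(1)$, and conclude by applying \Cref{0025} with $\ell = \lcm(\rho)$. Your explicit remark that $\mc{O}_{\mc{P}_{\Z}(\rho)}(\ell) \simeq \mc{O}_{\mc{P}_{\Z}(\rho)}(1)^{\otimes\ell}$, so that $\mc{O}_{\mc{P}_{\Z}(\rho)}(\lcm(\rho))$ really is $\lcm(\rho)$ times the generator, is a useful piece of bookkeeping the paper leaves implicit.
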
 \begin{proof} We have that $\Pic(\P_{\Z}(\rho)) \simeq \Z$ is generated by the class of $\mc{O}_{\P_{\Z}(\rho)}(\lcm(\rho))$ and that $\Pic(\mc{P}_{\Z}(\rho)) \simeq \Z$ is generated by the class of $\mc{O}_{\mc{P}_{\Z}(\rho)}(\lcm(1))$ by \Cref{0002} and \Cref{0018}, respectively. We have the desired claim by \Cref{0025}. \end{proof}

\begin{remark} \label{0017} There exist $\rho,\ell$ for which the natural map \labelcref{0016-eqn01} is not an isomorphism. For example, in case $\rho = (1,2)$ and $\ell = 1$, the element $t_{0} \in R[t_{0}^{-1}]_{2}$ is not in the image of the map \labelcref{0016-eqn03} for $m = 1$ and $i = 0$. We have $\mc{O}_{\P(\rho)}(1) \simeq \mc{O}_{\P(\rho)}$, and the pullback \labelcref{0016-eqn01} is multiplication by $t_{1} \in \Gamma(\mc{P}(\rho),\mc{O}_{\mc{P}(\rho)}(1))$; see \Cref{0005} for details. Furthermore, the natural map $\mc{O}_{\P(\rho)}(1) \otimes \mc{O}_{\P(\rho)}(1) \to \mc{O}_{\P(\rho)}(2)$ is not an isomorphism; here \cite[II, 2.5.13]{EGA} does not apply since $R$ is not generated in degree 1. (See also \cite[4.8]{DELORME-ESPACESPROJECTIFSANISOTROPES}, \cite[1.5.3]{DOLGACHEV-WPV1982}.) \end{remark}

\begin{pg}[Proof of {\Cref{0021}}] \label{0020} The proof of the exactness of \labelcref{0021-01} is the same as in \Cref{0009} with the following modifications: to show $\mb{R}^{2}\pi_{\ast}\G_{m} = 0$, we use \Cref{0019}; to show $\mb{R}^{1}\pi_{\ast}\G_{m} \simeq \underline{\Z}$, we use \Cref{0018}. \par An automorphism of the weighted projective stack induces an automorphism of the weighted projective space by universal properties of a coarse moduli space morphism; hence, if $\mc{X}$ is etale-locally isomorphic to $\mc{P}_{\Z}(\rho)$, then $X$ is etale-locally isomorphic to $\P_{\Z}(\rho)$. \par The commutativity of \labelcref{0021-02} follows from \Cref{0016}. \qed \end{pg}

\bibliography{../allbib.bib}
\bibliographystyle{alpha}

\end{document}